\documentclass[final,onefignum,onetabnum]{siamart190516}

\usepackage{amsmath,amssymb}
\RequirePackage[numbers]{natbib}
\usepackage[utf8]{inputenc} %unicode support
\usepackage{amsfonts}
\usepackage{bm}
\usepackage{hyperref}
\usepackage{cleveref}
\usepackage{graphicx}
\usepackage[export]{adjustbox}
\usepackage[caption=false,font=footnotesize,indention=0cm]{subfig}
\usepackage[shortlabels]{enumitem}
\usepackage{algorithm}
\usepackage{algpseudocode}

\newtheorem{remark}[theorem]{Remark}

\def\N{\mathbb{N}}
\def\R{\mathbb{R}}
\def\C{\mathbb{C}}
\def\V{\mathcal{V}}

\def\addlegendimage{\csname pgfplots@addlegendimage\endcsname}

\ifpdf
\hypersetup{
  pdftitle={Deterministic sketching for Krylov subspace methods},
  pdfauthor={K. Bergermann}
}
\fi

\headers{Deterministic sketching for Krylov subspace methods}{K.\ Bergermann}

\title{Deterministic sketching for Krylov subspace methods}

\author{Kai Bergermann\thanks{Centro di Ricerca Matematica Ennio De Giorgi, Scuola Normale Superiore, Pisa, Italy
		(\email{kai.bergermann@sns.it})}}

\begin{document}

\maketitle

% REQUIRED
\begin{abstract}
Randomized sketching is currently introduced into every area of numerical linear algebra.
In Krylov subspace methods, it allows runtime savings at the cost of small accuracy reductions.
This work offers a different view on sketching in Krylov methods by analyzing what subspace embeddings are obtained by arbitrary sketching matrices.
The analysis gives rise to a deterministic sketching approach leveraging row subset selection techniques that yield subspace embeddings holding with probability 1.
We propose deterministically sketched Krylov methods for matrix functions, linear systems, and eigenvalue problems that show a similar performance to their randomly sketched counterparts.
\end{abstract}

% REQUIRED
\begin{keywords}
Krylov subspace methods, deterministic sketching, row subset selection, matrix functions, linear systems, eigenvalue problems
\end{keywords}

% REQUIRED
\begin{AMS}
65F10, % Iterative numerical methods for linear systems
65F15, % Numerical computation of eigenvalues and eigenvectors of matrices
65F60, % Numerical computation of matrix exponential and similar matrix functions
68W20 % Randomized algorithms
\end{AMS}

\section{Introduction}\label{sec:intro}

Krylov subspace methods are a cornerstone of numerical linear algebra.
Over the last decades, a wealth of efficient iterative methods has been developed and analyzed for fundamental linear algebra problems including linear systems, eigenvalue problems, matrix functions, and matrix equations \cite{saad2003iterative,moler2003nineteen,saad2011numerical,golub2013matrix,liesen2013krylov,simoncini2016computational}.

A recent trend in numerical linear algebra is randomized sketching \cite{halko2011finding,woodruff2014sketching,martinsson2020randomized}.
Its basic idea is to draw a random sketching matrix $\bm{S}\in\C^{s\times n}$ to reduce a problem of the large matrix $\bm{A}\in\C^{n\times m}$ to that of the sketched matrix $\bm{SA}\in\C^{s\times m}$ with $s\ll n$ by forming random linear combinations of the rows of $\bm{A}$.
Initial success of this approach occurred in least-squares problems \cite{sarlos2006improved,rokhlin2008fast} and low-rank approximation \cite{sarlos2006improved,rokhlin2010randomized,halko2011finding}; however, many recent works have focused on applying randomized sketching to Krylov subspace methods, cf., e.g., \cite{balabanov2022randomized,guttel2023randomized,cortinovis2024speeding,nakatsukasa2024fast,burke2024krylov,guttel2024sketch,bucci2025randomized,burke2025gmres,chung2025randomized,de2025arandomized,de2025brandomized,de2025crandomized,guidotti2025accelerating,jang2025randomized,krieger2025general,palitta2025sketched,palitta2025sketchedmateq}.
Here, the goal is reducing computational costs of the full orthogonalization of the Krylov basis for non-symmetric problems by requiring merely orthogonality with respect to the $s$-dimensional inner product induced by the sketching matrix $\bm{S}$.
This $\bm{S}$-orthogonality can either be prescribed during the construction of a Krylov basis \cite{balabanov2022randomized,cortinovis2024speeding} or \emph{a-posteriori} via basis whitening after having constructed a non-orthogonal Krylov basis by an incomplete orthogonalization procedure such as the $k$-truncated Arnoldi method \cite{guttel2023randomized,nakatsukasa2024fast}.

Historically, randomized algorithms were often referred to as Monte Carlo methods, which have both a long history and many fields of application \cite{metropolis1949monte,hammersley1964monte,rubinstein2016simulation}.
One such example is the Girard--Hutchinson estimator for approximating the trace of a matrix function by sums of quadratic forms with random Gaussian or Rademacher vectors \cite{girard1989fast,hutchinson1989stochastic}.
While effective for obtaining low-accuracy estimates, the convergence of this method is typically rather slow.

This work is motivated by ideas to derandomize the Girard--Hutchinson estimator by replacing random probing vectors by deterministic Hadamard \cite{bekas2007estimator} or graph coloring probing vectors \cite{tang2012probing,frommer2021analysis}.
These can lead to convergence of trace (or diagonal) estimators with fewer probing vectors by exploiting known problem properties such as Kronecker product structure \cite{bergermann2022fast} or entry decay in matrix functions \cite{benzi1999bounds,benzi2007decay,benzi2013decay}.
\Cref{fig:trace_estimation} shows the convergence behavior of the Girard--Hutchinson estimator for approximating the Estrada index \cite{estrada2000characterization} of the street network of Luxembourg\footnote{\url{https://sparse.tamu.edu/DIMACS10/luxembourg_osm}}.
It illustrates that investing upfront computations to obtain deterministic graph coloring probing vectors can lead to significantly faster convergence in comparison to the classical randomized probing approach.

\begin{figure}
	\centering
	\includegraphics[width=0.7\textwidth]{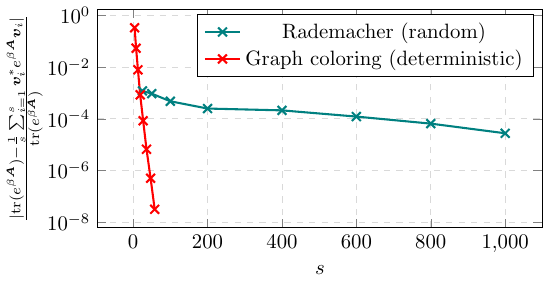}
	\caption{Relative trace estimation error for randomized and deterministic probing vectors $\bm{v}_i$ in the Girard--Hutchinson trace estimator $\text{tr}(e^{\beta\bm{A}}) \approx \frac{1}{s}\sum_{i=1}^s \bm{v}_i^\ast e^{\beta\bm{A}}\bm{v}_i$, where $\bm{A}\in\R^{114\, 599\times 114\, 599}$ is the symmetric adjacency matrix of the street network of Luxembourg and $\beta=2/\rho(\bm{A})$, where $\rho(\bm{A})$ denotes the spectral radius.
	The chosen probing vectors are random Rademacher and deterministic graph coloring vectors.
	The convergence of the randomized method is significantly improved by investing upfront computations to obtain the graph coloring vectors.}\label{fig:trace_estimation}
\end{figure}

This paper approaches sketching in Krylov subspace methods from an alternative angle.
In the literature on randomized sketching for Krylov methods, the definitions of oblivious subspace embeddings and the choice of sketching matrices are typically tied to one another.
This work, instead, investigates what subspace embeddings are obtained by arbitrary sketching matrices.
This analysis provides an objective for choosing optimal (random or deterministic) sketching matrices over a fixed matrix family: for a given basis of a given subspace, maximize the smallest singular value of its sketched basis.
Exploiting the fact that this objective is well-studied in the context of model order reduction, we propose the use of deterministic row subset selection sketching matrices in sketched Krylov methods.
In analogy to the derandomization of trace estimators, cf.~\Cref{fig:trace_estimation}, this approach requires problem-specific upfront computations.
After this, the sketching matrix is extremely cheap to apply and satisfies subspace embeddings that hold \emph{with probability 1}|in contrast to randomly sketched methods, which satisfy the same \emph{with high probability}.
We propose deterministically sketched Krylov subspace methods for matrix functions, linear systems, and eigenvalue problems.
Numerical experiments illustrate that they achieve similar accuracies at comparable runtimes to their randomly sketched counterparts.

The term \emph{deterministic sketching} has previously been used in the matrix streaming setting, i.e., where memory constraints only allow accessing the matrix one row at a time \cite{liberty2013simple,ghashami2016frequent}.
However, this work is, to the best of our knowledge, the first to apply deterministic sketching in Krylov subspace methods.

The remainder of this paper is organized as follows.
\Cref{sec:randomized_sketching} introduces Krylov subspace methods, randomized sketching, and the sketched Krylov methods sFOM for matrix functions, sGMRES for linear systems, and sRR for eigenvalue problems.
\Cref{sec:subspace_embeddings} analyzes what subspace embeddings are obtained by arbitrary sketching matrices.
\Cref{sec:deterministic_sketching} introduces the framework of deterministic sketching via row subset selection matrices in Krylov subspace methods and reviews existing row subset selection methods from the model order reduction literature.
\Cref{sec:algorithms} summarizes all ingredients into the deterministically sketched Krylov methods dsFOM for matrix functions, dsGMRES for linear systems, and dsRR for eigenvalue problems.
\Cref{sec:numerics} provides numerical experiments of these methods in comparison to their unsketched and randomly sketched counterparts.

\subsection{Notation}

Throughout this manuscript, $\bm{M}^\ast$ denotes the conjugate transpose and $\bm{M}^\dagger$ the pseudoinverse of a matrix $\bm{M}\in\C^{n\times m}$.
Moreover, the smallest and largest singular values are denoted by $\sigma_{\min}(\bm{M})$ and $\sigma_{\max}(\bm{M})$, respectively, giving rise to the definition of the condition number $\kappa(\bm{M})=\frac{\sigma_{\max}(\bm{M})}{\sigma_{\min}(\bm{M})}$.
The symbols $\bm{0}, \bm{1}\in\C^n$ denote vectors of all zeros and ones, respectively, $\bm{I}\in\C^{n\times n}$ denotes the identity matrix, and $\bm{e}_i\in\C^n$ the $i$-th column of $\bm{I}$.

\section{Randomized sketching in Krylov subspace methods}\label{sec:randomized_sketching}

A wide range of established methods in numerical linear algebra builds on the Krylov subspace.
For a matrix $\bm{A}\in\C^{n\times n}$, a vector $\bm{b}\in\C^n$, and a subspace dimension $m\in\N$, it is defined as
\begin{equation}\label{eq:krylov_subspace}
\mathcal{K}_m(\bm{A},\bm{b}) = \text{span}\{\bm{b},\bm{Ab},\dots,\bm{A}^{m-1}\bm{b}\}.
\end{equation}
Approximations to fundamental linear algebra problems including linear systems, eigenvalue problems, matrix functions, or matrix equations can often be obtained from relatively low-dimensional subspaces in a fast and accurate manner \cite{saad2003iterative,golub2013matrix}.

Since the conditioning of the vectors $\bm{b},\bm{Ab},\dots,\bm{A}^{m-1}\bm{b}$ grows exponentially as a function of $m$ \cite{gautschi1979condition,beckermann2000condition}, the starting point of classical Krylov subspace methods is the generation of an orthonormal basis $\bm{V}_m\in\C^{n\times m}$ of $\mathcal{K}_m(\bm{A},\bm{b})$.
Starting from $\bm{b}/\|\bm{b}\|_2$, each new basis vector is obtained from a matrix-vector product with $\bm{A}$, followed by some form of Gram--Schmidt procedure as well as normalization.
For $\bm{A}$ Hermitian, this is achieved by the Lanczos method \cite{lanczos1950iteration}, which constitutes a three-term recurrence.
Denoting the number of non-zero entries in $\bm{A}$ by $\texttt{nnz}(\bm{A})$, this procedure incurs a computational complexity of $\mathcal{O}(\texttt{nnz}(\bm{A})m+nm)$.
In the non-Hermitian case, full orthogonalization against all previous basis vectors is required in the Arnoldi method \cite{arnoldi1951principle}, which has computational complexity $\mathcal{O}(\texttt{nnz}(\bm{A})m+nm^2)$.
Especially for sparse $\bm{A}$ and large $m$, orthogonalization costs can become the computational bottleneck of the Arnoldi method.

A recent line of work exploits randomized sketching techniques to reduce these orthogonalization costs for non-Hermitian $\bm{A}$ to a linear runtime dependence on $m$.
Sketching refers to embedding vectors from an $m$-dimensional subspace $\V\subset\C^n$ into $\C^s$ with $s\ll n$ by means of a sketching matrix $\bm{S}\in\C^{s\times n}$ that approximately preserves Euclidean lengths \cite{woodruff2014sketching,martinsson2020randomized}.
Such $\bm{S}$ are typically drawn from a family of random matrices with certain statistical guarantees for arbitrary subspaces.

\begin{definition}\label{def_oblivious_subspace_embedding}
	A matrix $\bm{S}\in\C^{s\times n}$ is called an oblivious $\epsilon$-subspace embedding with $\epsilon\in [0,1)$ if
	\begin{equation}\label{eq:epsilon_subspace_embedding}
	(1-\epsilon)\|\bm{v}\|_2^2 \leq \|\bm{Sv}\|_2^2 \leq (1+\epsilon)\|\bm{v}\|_2^2
	\end{equation}
	holds for any $\bm{v}\in\V$ and any subspace $\V\subset\C^n$ \emph{with high probability}.
\end{definition}

Sarl\'os introduced the term Johnson--Lindenstrauss transform for such embedding matrices \cite{sarlos2006improved} in reference to the Johnson--Lindenstrauss Lemma \cite{johnson1984extensions}, which first stated a version of \eqref{eq:epsilon_subspace_embedding} for the embedding of a finite set of points from $\C^n$ to $\C^s$.
Besides other examples such as sparse maps \cite{meng2013low,nelson2013osnap} or matrices with i.i.d.\ Gaussian normal entries \cite{woodruff2014sketching}, a popular choice of sketching matrices satisfying \eqref{eq:epsilon_subspace_embedding} are subsampled random Fourier transforms (SRFT)
\begin{equation}\label{eq:srft}
\bm{S} = \sqrt{\frac{n}{s}} \bm{RHD}.
\end{equation}
Here, $\bm{R}\in\C^{s\times n}$ consists of randomly selected rows of $\bm{I}\in\R^{n\times n}$, $\bm{H}\in\C^{n\times n}$ is a dense but structured orthogonal trigonometric transform, e.g., discrete Fourier, discrete cosine, or Walsh--Hadamard transform, and $\bm{D}\in\C^{n\times n}$ is diagonal with uniformly random diagonal entries $\pm 1$ \cite{sarlos2006improved}.

It should be mentioned that \eqref{eq:epsilon_subspace_embedding} holds with a quantifiable failure probability \cite{sarlos2006improved}.
The required sketching dimension of a Johnson--Lindenstrauss transform to satisfy a fixed failure probability behaves as $s=\mathcal{O}(\epsilon^{-2})$.
For this reason, relatively crude embeddings with $\epsilon=1/\sqrt{2}$ or $\epsilon=1/2$ are typically used in practice, which can be achieved by choosing, e.g., $s=2m$ or $s=4m$, where $m\in\N$ is the dimension of the subspace $\V$ \cite{nakatsukasa2024fast,guttel2023randomized}.

\subsection{Generating the non-orthogonal Krylov basis}\label{sec:krylov_basis}

Oblivious subspace embeddings from \Cref{def_oblivious_subspace_embedding} give rise to the sketch-and-solve paradigm \cite{sarlos2006improved,woodruff2014sketching,martinsson2020randomized}.
Its basic idea is to reduce computations with the original problem matrix $\bm{A}\in\C^{n\times m}$ with $n\geq m$ to $\bm{SA}\in\C^{s\times m}$, where $s\ll n$.
Early uses of this paradigm include, e.g., least-squares problems \cite{sarlos2006improved,rokhlin2008fast} or low-rank approximation \cite{sarlos2006improved,rokhlin2010randomized,halko2011finding}.

More recently, randomized sketching has been introduced for speeding up the generation of the basis $\bm{V}_m\in\C^{n\times m}$ of Krylov subspaces \eqref{eq:krylov_subspace} \cite{balabanov2022randomized,guttel2023randomized,cortinovis2024speeding,nakatsukasa2024fast,burke2024krylov,guttel2024sketch,bucci2025randomized,burke2025gmres,chung2025randomized,de2025arandomized,de2025brandomized,de2025crandomized,guidotti2025accelerating,jang2025randomized,krieger2025general,palitta2025sketched,palitta2025sketchedmateq}.
Instead of the standard orthogonality of $\bm{V}_m$ with respect to the $n$-dimensional Euclidean inner product, several approaches merely impose orthogonality of the sketched basis $\bm{SV}_m\in\C^{s\times m}$ with respect to the $\bm{S}$-inner product $\langle\bm{u},\bm{v}\rangle_{\bm{S}}=\langle\bm{Su},\bm{Sv}\rangle$ for $\bm{u},\bm{v}\in\C^n$.
The randomized Gram--Schmidt process \cite{balabanov2022randomized} achieves this by performing full orthogonalization of every new basis vector with respect to this $\bm{S}$-inner product.
In contrast, the $k$-truncated Arnoldi method that has recently been used in \cite{nakatsukasa2024fast,guttel2023randomized} performs incomplete orthogonalization only against the $k\in\N$ previous basis vectors with respect to the standard $n$-dimensional Euclidean inner product, cf.~\Cref{alg:k_trunc_arnoldi}.
Other techniques such as the Chebychev recurrence or Newton polynomials may also be used to generate non-orthogonal Krylov bases \cite{nakatsukasa2024fast}.
In these methods, orthogonality in the $\bm{S}$-inner product is prescribed \emph{a-posteriori} by basis whitening \cite{rokhlin2008fast}.

\begin{algorithm}[t]%[htb!]
%	\vspace{0.3em}
	\begin{tabular}{lll}
		Input:
		& $\bm{A}\in\C^{n \times n},$ & Matrix.\\
		& $\bm{b}\in\C^n,$ & Vector.\\
		& $m\in\N,$ & Dimension of $\mathcal{K}_m(\bm{A},\bm{b})$.\\
		& $k\in\N,$ & Orthogonalization truncation parameter.\vspace{.3em}\\
		Output: & $\bm{V}_m = [\bm{v}_1,\dots,\bm{v}_m]\in\C^{n\times m}$, & Non-orthogonal basis of $\mathcal{K}_m(\bm{A},\bm{b})$.\\
		& $\bm{M}_m = \bm{AV}_m\in\C^{n\times m},$ & Stored matrix-matrix product.\\
	\end{tabular}
	\vspace{.3em}
	\begin{algorithmic}[1]
		\State $\bm{v}_1 = \bm{b}/\|\bm{b}\|_2$
		\State $\bm{m}_1 = \bm{Av}_1$
		\For{$j=2,\dots,m$}
		\State $\bm{w}_j = (\bm{I} - \bm{v}_{j-1}\bm{v}_{j-1}^\ast - \dots - \bm{v}_{j-k}\bm{v}_{j-k}^\ast)\bm{m}_{j-1}$\Comment{$\bm{v}_i=\bm{0}$ for $i\leq 0$}
		\State $\bm{v}_j = \bm{w}_j/\|\bm{w}_j\|_2$
		\State $\bm{m}_j = \bm{Av}_j$
		\EndFor 
	\end{algorithmic}
	\caption{$k$-truncated Arnoldi method.}\label{alg:k_trunc_arnoldi}
\end{algorithm}

\begin{definition}\label{def:basis_whitening}
	Let $\bm{V}_m\in\C^{n\times m}$ with $n\geq m$ have full rank and $\bm{S}\in\C^{s\times n}$ be a subspace embedding.
	Then the thin QR decomposition $\bm{SV}_m=\bm{Q}_m\bm{R}_m$ with $\bm{Q}_m\in\C^{s\times m}$ orthonormal and $\bm{R}_m\in\C^{m\times m}$ upper triangular allows defining the \emph{whitened basis}
	\begin{equation}\label{eq:basis_whitening}
	\widetilde{\bm{V}}_m = \bm{V}_m\bm{R}_m^{-1}.
	\end{equation}
\end{definition}

Clearly, basis whitening orthogonalizes the sketched basis via $\bm{SV}_m\bm{R}_m^{-1}=\bm{Q}_m$ as in the randomized Gram--Schmidt case.
Computationally, performing the $\mathcal{O}(nm^2)$ computation of \eqref{eq:basis_whitening} explicitly should be avoided whenever possible.
Instead, \Cref{sec:sFOM,sec:sGMRES,sec:sRR} present ways to interact with $\bm{R}_m^{-1}$ only by means of $\mathcal{O}(sm^2)$ back-substitution solves.

\begin{remark}\label{rem:krylov_basis_conditioning}
The drawback of the $k$-truncated Arnoldi method is that the condition number of the non-orthogonal basis $\bm{V}_m$ before whitening is often observed to grow towards or beyond inverse machine precision \cite{nakatsukasa2024fast,guttel2023randomized}.
Besides increasing the truncation length $k\in\N$, this issue can be addressed by invoking the sketch-and-select Arnoldi process \cite{guttel2024sketch}, which orthogonalizes against $k$ basis vectors that are chosen from the previously generated basis vectors based on different criteria.
\end{remark}

We close this subsection by noting that the randomized Gram--Schmidt \cite{balabanov2022randomized} and sketch-and-select methods \cite{guttel2024sketch} require access to the full sketching matrix $\bm{S}$ from the first iteration, i.e., they require \emph{oblivious} embeddings.
In contrast, the $k$-truncated Arnoldi method with subsequent basis whitening utilizes the sketching matrix only after the full non-orthogonal basis $\bm{V}_m$ has been generated.
This allows a non-oblivious basis-specific construction of the sketching matrix, which is proposed in \Cref{sec:deterministic_sketching}.
Besides the occurrence of spurious Ritz values outside of the field-of-values of the considered matrix $\bm{A}$, an ill-conditioning of the basis below inverse machine precision has been found to not adversely influence sketched Krylov methods \cite{guttel2023randomized,nakatsukasa2024fast,palitta2025sketched}.

\subsection{Matrix functions}\label{sec:sFOM}

\begin{sloppypar}
This and the following two subsections introduce sketched versions of three established Krylov subspace methods addressing fundamental linear algebra problems: the full orthogonalization method (FOM) for approximating the action of a matrix function on a vector, the generalized minimal residual method (GMRES) for approximately solving a non-Hermitian linear system, and the Rayleigh--Ritz (RR) method for approximating a subset of the eigenvalues and -vectors of a matrix.
The starting point of each method is the $k$-truncated Arnoldi method, cf.~\Cref{alg:k_trunc_arnoldi}.
A brief derivation of the sketched methods including one error estimate each are given in the respective \Cref{sec:sFOM,sec:sGMRES,sec:sRR}.
\end{sloppypar}

A scalar function $f:\C\rightarrow\C$ can be generalized to a mapping $f:\C^{n\times n}\rightarrow\C^{n\times n}$ between square matrices by several equivalent definitions, cf.~\cite[Sec.~1.2]{higham2008functions}.
A common problem in, e.g., differential equations \cite{al2011computing,hochbruck2010exponential,gaudreault2018kiops,bergermann2024adaptive} or network science applications \cite{benzi2020matrix,bergermann2022fast,guttel2023randomized,bergermann2025gradient} is the approximation of the action $f(\bm{A})\bm{b}$ of the matrix function of a given matrix $\bm{A}\in\C^{n\times n}$ on a given vector $\bm{b}\in\C^n$.

We briefly present the sketched FOM (sFOM) method \cite{guttel2023randomized}, for which an equivalent version has been independently proposed in \cite{cortinovis2024speeding}, and refer to \cite{guttel2023randomized} for more details.
A general form of the FOM approximation that admits the use of non-orthogonal Krylov bases $\bm{V}_m$ reads
\begin{equation}\label{eq:FOM}
f(\bm{A})\bm{b} \approx \bm{f}_m^{\mathrm{FOM}} = \bm{V}_m f(\bm{V}_m^\dagger\bm{AV}_m) \bm{V}_m^\dagger\bm{b}.
\end{equation}

\begin{sloppypar}
The starting point of sFOM is using the Cauchy integral definition of matrix functions \cite[Sec.~1.2.3]{higham2008functions}
\begin{equation}\label{eq:fAb_cauchy_integral}
f(\bm{A})\bm{b} = \int_\Gamma f(t) (t\bm{I} - \bm{A})^{-1}\bm{b}d\mu(t) \approx \int_\Gamma \bm{x}_m(t)d\mu(t) =  \int_\Gamma \|\bm{b}\|_2 \bm{V}_m(t\bm{I} - \bm{H}_m)^{-1}\bm{e}_1 d\mu(t),
\end{equation}
where $\Gamma$ denotes a closed contour containing the spectrum of $\bm{A}$, as well as solving \mbox{$(t\bm{I} - \bm{A})\bm{x}(t)=\bm{b}$} via the FOM approximation for linear systems that includes the Hessenberg reduction $\bm{H}_m\in\C^{m\times m}$ of $\bm{A}$ within the Krylov subspace \cite{saad2003iterative}.
The sketching matrix $\bm{S}\in\C^{s\times n}$ first enters the stage when the typically required orthogonality condition of the residual $\bm{r}_m(t) = \bm{b} - (t\bm{I} - \bm{A})\bm{x}_m(t)$ to the Krylov basis $\bm{V}_m$ is loosened to the orthogonality of $\bm{Sr}_m$ to $\bm{SV}_m$.
The latter condition leads to
\begin{align}
\int_\Gamma \bm{x}_m(t)d\mu(t) &~= \bm{V}_m \int_\Gamma \left[ (\bm{SV}_m)^\ast (t\bm{SV}_m - \bm{SAV}_m) \right]^{-1} d\mu(t) (\bm{SV}_m)^\ast \bm{Sb}\label{eq:sFOM_inv_integrand}\\
&~= \bm{V}_m (\bm{V}_m^\ast\bm{S}^\ast\bm{SV}_m)^{-1} f\left(\bm{V}_m^\ast\bm{S}^\ast\bm{SAV}_m(\bm{V}_m^\ast\bm{S}^\ast\bm{SV}_m)^{-1}\right) (\bm{SV}_m)^\ast \bm{Sb},\label{eq:sFOM_non_orth}
\end{align}
where the last equality leverages an explicit formula for the inverse in \eqref{eq:sFOM_inv_integrand}.
Since the above derivation is independent of the choice of the basis spanning the Krylov subspace $\mathcal{K}_m(\bm{A},\bm{b})$, basis whitening is employed, cf.~\Cref{def:basis_whitening}.
Replacing $\bm{V}_m$ in \eqref{eq:sFOM_non_orth} by $\widetilde{\bm{V}}_m=\bm{V}_m\bm{R}_m^{-1}$ leads to $\widetilde{\bm{V}}_m^\ast\bm{S}^\ast\bm{S}\widetilde{\bm{V}}_m=\bm{Q}_m^\ast\bm{Q}_m=\bm{I}$ and
\begin{equation}\label{eq:sFOM}
f(\bm{A})\bm{b} \approx \bm{f}_m^{\mathrm{sFOM}} = \bm{V}_m (\bm{R}_m^{-1} f(\bm{Q}_m^\ast \bm{SAV}_m \bm{R}_m^{-1}) \bm{Q}_m^\ast \bm{Sb}).
\end{equation}
The sFOM approximant \eqref{eq:sFOM} can be cheaply evaluated using back-substitution for inverting $\bm{R}_m$ since the argument of the function $f$ is a small $m\times m$ upper Hessenberg matrix \cite{cortinovis2024speeding}.
\end{sloppypar}

A comparison of the FOM \eqref{eq:FOM} and sFOM \eqref{eq:sFOM} approximants yields \cite[Cor.~2.4]{guttel2023randomized}
\begin{equation}\label{eq:sFOM_error}
\|\bm{f}_m^{\mathrm{FOM}} - \bm{f}_m^{\mathrm{sFOM}}\|_2 \leq \sqrt{\frac{1+\epsilon}{1-\epsilon}} \|\bm{b}\|_2 \|f(\bm{V}_m^\dagger\bm{AV}_m) - f(\bm{V}_m^\ast\bm{S}^\ast\bm{SAV}_m)\|_2.
\end{equation}

\subsection{Linear systems}\label{sec:sGMRES}

The problem of solving the linear system of equations $\bm{Ax}=\bm{b}$ for a given matrix $\bm{A}\in\C^{n\times n}$ and a given right-hand side $\bm{b}\in\C^n$ for the unknown vector $\bm{x}\in\C^n$ is ubiquitous in linear algebra and its applications, cf.~\cite{saad2003iterative,golub2013matrix} and references therein.

\begin{sloppypar}
Given a starting guess \mbox{$\bm{x}_0\in\C^n$} (such as $\bm{x}_0=\bm{0}$) with initial residual \mbox{$\bm{r}_0=\bm{b}-\bm{Ax}_0$} (such as $\bm{r}_0=\bm{b}$), classical GMRES constructs approximations of the form \mbox{$\bm{x} \approx \bm{x}_0 + \bm{V}_m\widetilde{\bm{y}}$}, where $\bm{V}_m$ denotes the basis of the Krylov subspace $\mathcal{K}_m(\bm{A},\bm{r}_0)$ \cite{saad1986gmres,saad2003iterative}.
The vector $\widetilde{\bm{y}}\in\C^m$ taking linear combinations of the columns of $\bm{V}_m$ is chosen to minimize the residual $\bm{r}=\bm{b}-\bm{Ax} = \bm{r}_0-\bm{AV}_m\bm{y}$, i.e.,
\begin{equation}\label{eq:GMRES_LS}
\widetilde{\bm{y}} = \arg\min_{\bm{y}\in\C^m} \|\bm{AV}_m\bm{y} - \bm{r}_0\|_2 = (\bm{AV}_m)^\dagger\bm{r}_0.
\end{equation}
The GMRES approximation is then given by
\begin{equation}\label{eq:GMRES}
\bm{x}\approx\bm{x}_m^{\mathrm{GMRES}} = \bm{x}_0 + \bm{V}_m\widetilde{\bm{y}} = \bm{x}_0 + \bm{V}_m(\bm{AV}_m)^\dagger\bm{r}_0.
\end{equation}
\end{sloppypar}

\begin{sloppypar}
The sketched version sGMRES \cite{nakatsukasa2024fast} builds around sketching the overdetermined least-squares problem \eqref{eq:GMRES_LS} \cite{rokhlin2008fast}
\begin{equation}\label{eq:sGMRES_LS}
\widehat{\bm{y}} = \arg\min_{\bm{y}\in\C^m} \|\bm{S}(\bm{AV}_m\bm{y} - \bm{r}_0)\|_2 = (\bm{SAV}_m)^\dagger(\bm{Sr}_0).
\end{equation}
Then, the thin QR-decomposition $\bm{S}\bm{AV}_m = \bm{Q}_m\bm{R}_m$ is computed\footnote{Note that this step differs from \Cref{sec:sFOM,sec:sRR}, where basis whitening, i.e., the QR-decomposition of $\bm{S}\bm{V}_m$ is computed.}.
With this, the sGMRES approximant becomes
\begin{equation}\label{eq:sGMRES}
\bm{x}\approx\bm{x}_m^{\mathrm{sGMRES}} =  \bm{x}_0 + \bm{V}_m\widehat{\bm{y}} = \bm{x}_0 + \bm{V}_m(\bm{R}_m^{-1} \bm{Q}_m^\ast \bm{Sr}_0).
\end{equation}
Basic ingredients such as preconditioning or restarting that aim at reducing the required number of Krylov iterations or basis vectors to store can be incorporated into sGMRES in a straightforward manner \cite[Sec.~3.5--3.6]{nakatsukasa2024fast}.
\end{sloppypar}

The relation \cite[Eq.~(3.5)]{nakatsukasa2024fast}
\begin{equation}\label{eq:sGMRES_error}
\|\bm{Ax}_m^{\mathrm{GMRES}} - \bm{b}\|_2 \leq \|\bm{Ax}_m^{\mathrm{sGMRES}} - \bm{b}\|_2 \leq \sqrt{\frac{1+\epsilon}{1-\epsilon}} \|\bm{Ax}_m^{\mathrm{GMRES}} - \bm{b}\|_2
\end{equation}
between the GMRES and sGMRES residuals is directly inherited from standard analysis of the sketched least-squares problem \eqref{eq:sGMRES_LS}.

\subsection{Eigenvalue problems}\label{sec:sRR}

The final linear algebra problem considered in this section is the eigenvalue problem \cite{saad2011numerical,golub2013matrix} of finding a subset of the eigenvalues $\lambda_i\in\C$ and corresponding eigenvectors $\bm{x}_i\in\C^n, i=1,\dots,m$ for a given matrix $\bm{A}\in\C^{n\times n}$.

In \cite{nakatsukasa2024fast}, it is argued that the most natural formulation of the Rayleigh--Ritz method is to seek a non-zero vector $\bm{x}=\bm{V}_m\bm{y}\in\mathcal{K}_m(\bm{A},\bm{b})$ for some $\bm{b}\in\C^n$ such that the eigenvector residual $\bm{r}=\bm{Ax}-\lambda\bm{x}$ is orthogonal to the basis $\bm{V}_m$ of $\mathcal{K}_m(\bm{A},\bm{b})$.
Formally, this leads to the problem of finding $\bm{y}\in\C^m$ and $\lambda\in\C$ such that
$$
\bm{V}_m^\ast(\bm{AV}_m\bm{y} - \lambda\bm{V}_m\bm{y}) = \bm{0} \Leftrightarrow (\bm{V}_m^\dagger\bm{AV}_m)\bm{y} = \lambda\bm{y},
$$
which corresponds to a small $m\times m$ eigenvalue problem in the matrix $\bm{M}_\ast= \bm{V}_m^\dagger\bm{AV}_m$.
In the classical case where $\bm{V}_m$ is the orthonormal basis generated by the Arnoldi method, $\bm{M}_\ast$ is the corresponding upper Hessenberg matrix.
Eigenpairs $(\bm{y}_\ast,\lambda_\ast)$ of $\bm{M}_\ast$ are called Ritz-vectors and -values and the tuples $(\bm{x}^{\mathrm{RR}},\lambda^{\mathrm{RR}}) = (\bm{V}_m\bm{y}_\ast,\lambda_\ast)$ form approximate eigenpairs of $\bm{A}$ \cite{golub2013matrix}.

An alternative formulation considers the rectangular eigenvalue problem of minimizing the eigenvector residual \emph{over} the Krylov subspace $\mathcal{K}_m(\bm{A},\bm{b})$ \cite{nakatsukasa2024fast}, i.e.,
\begin{equation}\label{eq:rectangular_eval_prob}
\min_{\bm{y}\in\C^m,\lambda\in\C} \|\bm{AV}_m\bm{y} - \lambda\bm{V}_m\bm{y}\|_2 \quad \text{s.t.}\quad \|\bm{V}_m\bm{y}\|_2=1.
\end{equation}
While the Rayleigh--Ritz method does not solve the problem \eqref{eq:rectangular_eval_prob}, any eigenpair $(\bm{y}_\ast,\lambda_\ast)$ of $\bm{M}_\ast = \bm{V}_m^\dagger\bm{AV}_m$ satisfies
$$
\|\bm{AV}_m\bm{y}_\star - \lambda_\star\bm{V}_m\bm{y}_\star\|_2 = \|(\bm{AV}_m - \bm{V}_m\bm{M}_\star)\bm{y}_\star\|_2
$$
and Rayleigh--Ritz does solve the related variational eigenvalue problem \cite{nakatsukasa2024fast}
\begin{equation}\label{eq:sRR_variational_problem}
\min_{\bm{M}\in\C^{m\times m}} \| \bm{AV}_m - \bm{V}_m\bm{M} \|_F.
\end{equation}

Similarly to sGMRES, the sketched Rayleigh--Ritz (sRR) method \cite{nakatsukasa2024fast} builds around formulating the classical Rayleigh--Ritz method in terms of a sketched least-squares problem.
Sketching \eqref{eq:sRR_variational_problem} leads to
$$
\min_{\bm{M}\in\C^{m\times m}} \| \bm{S}(\bm{AV}_m - \bm{V}_m\bm{M}) \|_F,
$$
which has the solution $\widehat{\bm{M}} = (\bm{SV}_m)^ \dagger(\bm{SAV}_m) = \bm{R}_m^{-1}(\bm{Q}_m^\ast(\bm{S}\bm{AV}_m))$, where basis whitening is applied in the last equality, cf.~\Cref{def:basis_whitening}.
Solving the small $m\times m$ eigenvalue problem
\begin{equation}\label{eq:sRR}
(\bm{R}_m^{-1}\bm{Q}_m^\ast (\bm{SAV}_m)) \bm{y}_i = \lambda_i\bm{y}_i
\end{equation}
yields the approximate eigenvalues $\lambda_1^{\mathrm{sRR}},\dots,\lambda_m^{\mathrm{sRR}}$ of $\bm{A}$, while the corresponding approximate eigenvectors are obtained via $
\bm{x}_i^{\mathrm{sRR}} = \frac{\bm{V}_m\bm{y}_i}{\|\bm{V}_m\bm{y}_i\|}, i=1,\dots,m$.

For sRR, the following relation holds between the sketched and unsketched eigenvector residuals \cite[Eq.~(6.11)]{nakatsukasa2024fast}
\begin{equation}\label{eq:sRR_error}
\begin{aligned}
\sqrt{\frac{1-\epsilon}{1+\epsilon}} \frac{\|\bm{S}(\bm{AV}_m\bm{y}_i - \lambda_i\bm{V}_m\bm{y}_i)\|_2}{\|\bm{SV}_m\bm{y}_i\|_2} &~\leq \frac{\|\bm{AV}_m\bm{y}_i - \lambda_i\bm{V}_m\bm{y}_i\|_2}{\|\bm{V}_m\bm{y}_i\|_2}\\
&~\leq \sqrt{\frac{1+\epsilon}{1-\epsilon}} \frac{\|\bm{S}(\bm{AV}_m\bm{y}_i - \lambda_i\bm{V}_m\bm{y}_i)\|_2}{\|\bm{SV}_m\bm{y}_i\|_2},
\end{aligned}
\end{equation}
for all $i=1,\dots,m$.

\section{General subspace embeddings}\label{sec:subspace_embeddings}

The exposition in \Cref{sec:randomized_sketching} aims to reflect the angle from which sketching is typically introduced in the numerical linear algebra context.
It suggests a certain historically grounded intertwinedness of the subspace embedding property \eqref{eq:epsilon_subspace_embedding} and the choice of randomized sketching transform.
In particular, sparse maps and SRFTs appear to have established themselves as unchallenged sketching transform of choice.
However, questions about more general sketching matrices and criteria for their effectiveness have recently been raised \cite{amsel2026linear}.

This section offers a different angle on sketching in numerical linear algebra by analyzing what subspace embeddings are obtained by \emph{arbitrary} sketching matrices.

\begin{proposition}\label{prop:deterministic_subspace_embedding}
	Let $\bm{S}\in\C^{s\times n}$ be \emph{any} matrix and $\bm{V}\in\C^{n\times m}$ a (generally non-orthogonal) basis of an $m$-dimensional subspace $\V\subset\C^n$.
	Then we have
	\begin{equation}\label{eq:general_subspace_embedding}
	\frac{\sigma_{\min}^2(\bm{SV})}{\sigma_{\max}^2(\bm{V})}\|\bm{v}\|_2^2 \leq \|\bm{Sv}\|_2^2 \leq \frac{\sigma_{\max}^2(\bm{SV})}{\sigma_{\min}^2(\bm{V})} \|\bm{v}\|_2^2.
	\end{equation}
\end{proposition}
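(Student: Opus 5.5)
Since $\bm{V}$ is a basis of $\V$, every $\bm{v}\in\V$ can be written uniquely as $\bm{v}=\bm{V}\bm{y}$ with a coefficient vector $\bm{y}\in\C^m$. Both sides of \eqref{eq:general_subspace_embedding} can then be related through $\|\bm{y}\|_2$. The proof runs through this intermediate quantity, comparing $\|\bm{Sv}\|_2=\|(\bm{SV})\bm{y}\|_2$ and $\|\bm{v}\|_2=\|\bm{V}\bm{y}\|_2$ separately with $\|\bm{y}\|_2$.

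The first ingredient is the standard Rayleigh-quotient characterization of extreme singular values. For any matrix $\bm{M}\in\C^{p\times m}$ with $p\geq m$ and any $\bm{y}\in\C^m$,
\[
\sigma_{\min}(\bm{M})\|\bm{y}\|_2\leq\|\bm{My}\|_2\leq\sigma_{\max}(\bm{M})\|\bm{y}\|_2 .
\]
This follows from the SVD, or from the min–max theorem applied to $\bm{M}^\ast\bm{M}$. We apply it once with $\bm{M}=\bm{V}$, giving
\[
\sigma_{\min}(\bm{V})\|\bm{y}\|_2\leq\|\bm{v}\|_2\leq\sigma_{\max}(\bm{V})\|\bm{y}\|_2 ,
\]
and once with $\bm{M}=\bm{SV}$, giving
\[
\sigma_{\min}(\bm{SV})\|\bm{y}\|_2\leq\|\bm{Sv}\|_2\leq\sigma_{\max}(\bm{SV})\|\bm{y}\|_2 .
\]
Since $\bm{V}$ has full column rank, $\sigma_{\min}(\bm{V})>0$, so we may divide by it.

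The two bounds are then chained. For the upper bound,
\[
\|\bm{Sv}\|_2\leq\sigma_{\max}(\bm{SV})\|\bm{y}\|_2\leq\frac{\sigma_{\max}(\bm{SV})}{\sigma_{\min}(\bm{V})}\|\bm{v}\|_2 .
\]
For the lower bound,
\[
\|\bm{Sv}\|_2\geq\sigma_{\min}(\bm{SV})\|\bm{y}\|_2\geq\frac{\sigma_{\min}(\bm{SV})}{\sigma_{\max}(\bm{V})}\|\bm{v}\|_2 .
\]
Squaring both inequalities gives \eqref{eq:general_subspace_embedding}.

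The only step needing care is the interpretation of $\sigma_{\min}(\bm{SV})$ when $s<m$. In that case $\bm{SV}$ has fewer rows than columns, and we take $\sigma_{\min}(\bm{SV})$ to be the $m$-th singular value, i.e.\ $\min_{\|\bm{y}\|_2=1}\|\bm{SVy}\|_2$. This quantity is $0$ when $s<m$ or when $\bm{SV}$ is rank deficient, and the lower bound then holds trivially. With this convention the inequality $\|\bm{My}\|_2\geq\sigma_{\min}(\bm{M})\|\bm{y}\|_2$ holds for every $\bm{M}$, so no other case distinction is needed. The upper bound is unaffected, since $\sigma_{\max}$ is always the operator 2-norm.
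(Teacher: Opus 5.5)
Your proof is correct and follows the paper's argument essentially verbatim: write $\bm{v}=\bm{V}\bm{y}$, bound $\|\bm{V}\bm{y}\|_2$ and $\|\bm{SV}\bm{y}\|_2$ against $\|\bm{y}\|_2$ by the extreme singular values, and chain the inequalities. The paper phrases the same bounds via Rayleigh quotients of $\bm{V}^\ast\bm{V}$ and $(\bm{SV})^\ast(\bm{SV})$, so it implicitly uses your convention. Your remark that $\sigma_{\min}(\bm{SV})$ must be read as $\min_{\|\bm{y}\|_2=1}\|\bm{SV}\bm{y}\|_2$ (hence $0$ when $s<m$) is therefore a useful explicit clarification.
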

\begin{proof}
	Since $\V=\text{Ran}(\bm{V})$, we may write every element $\bm{v}\in\V$ as $\bm{v}=\bm{V}\bm{y}$ for some $\bm{y}\in\C^m$.
	By
	\begin{equation}\label{eq:2norm_subordinarity}
	\|\bm{V}\bm{y}\|_2^2 \leq \|\bm{V}\|_2^2 \|\bm{y}\|_2^2 = \sigma_{\max}^2(\bm{V}) \|\bm{y}\|_2^2,
	\end{equation}
	we have
	\begin{equation*}
	\frac{\|\bm{SV}\bm{y}\|_2^2}{\|\bm{V}\bm{y}\|_2^2} \geq \frac{\|\bm{SV}\bm{y}\|_2^2}{\sigma_{\max}^2(\bm{V}) \|\bm{y}\|_2^2} = \frac{1}{\sigma_{\max}^2(\bm{V})} \frac{\bm{y}^\ast\bm{V}^\ast\bm{S}^\ast\bm{SV}\bm{y}}{\bm{y}^\ast\bm{y}} \geq \frac{\sigma_{\min}^2(\bm{SV})}{\sigma_{\max}^2(\bm{V})},
	\end{equation*}
	where the last inequality uses the fact that the Rayleigh quotient of the matrix $(\bm{SV})^\ast(\bm{SV})$ is minimized by its eigenvector to the smallest eigenvalue, or equivalently, the right singular vector to the square of the smallest singular value of $\bm{SV}$.
	
	For the upper bound, we again use \eqref{eq:2norm_subordinarity} to obtain
	\begin{equation*}
	\frac{\|\bm{SV}\bm{y}\|_2^2}{\|\bm{V}\bm{y}\|_2^2} \leq \frac{\|\bm{SV}\|_2^2 \|\bm{y}\|_2^2}{\|\bm{V}\bm{y}\|_2^2} = \sigma_{\max}^2(\bm{SV}) \frac{\|\bm{y}\|_2^2}{\|\bm{V}\bm{y}\|_2^2}.
	\end{equation*}
	Moreover, we have
	\begin{equation*}
	\sigma_{\min}^2(\bm{V}) \leq \frac{\bm{y}^\ast\bm{V}^\ast\bm{V}\bm{y}}{\bm{y}^\ast\bm{y}} \Leftrightarrow  \frac{\bm{y}^\ast\bm{y}}{\bm{y}^\ast\bm{V}^\ast\bm{V}\bm{y}} \leq \frac{1}{\sigma_{\min}^2(\bm{V})}
	\end{equation*}
	and hence
	\begin{equation*}
	\frac{\|\bm{SV}\bm{y}\|_2^2}{\|\bm{V}\bm{y}\|_2^2} \leq \frac{\sigma_{\max}^2(\bm{SV})}{\sigma_{\min}^2(\bm{V})}.
	\end{equation*}
\end{proof}

With \Cref{prop:deterministic_subspace_embedding}, the distortion factor caused by the embedding with an arbitrary sketching matrix $\bm{S}\in\C^{s\times n}$ reads
\begin{equation}\label{eq:distortion_cond_numbers}
\sqrt{\frac{\frac{\sigma_{\max}^2(\bm{SV})}{\sigma_{\min}^2(\bm{V})}}{\frac{\sigma_{\min}^2(\bm{SV})}{\sigma_{\max}^2(\bm{V})}}}  = \frac{\sigma_{\max}(\bm{SV}) \sigma_{\max}(\bm{V})}{\sigma_{\min}(\bm{SV})\sigma_{\min}(\bm{V})} = \kappa(\bm{SV}) \kappa(\bm{V}).
\end{equation}

\subsection{Whitening the non-orthogonal Krylov basis}\label{sec:whitened_basis}

Viewing \eqref{eq:distortion_cond_numbers} in the case of non-orthogonal Krylov bases $\bm{V}_m$ obtained from the $k$-truncated Arnoldi method, cf.~\Cref{alg:k_trunc_arnoldi}, appears discouraging.
As discussed in \Cref{rem:krylov_basis_conditioning}, $\kappa(\bm{V}_m)$ may be impractically large.

Better news can be expected after \emph{whitening} the basis, cf.~\Cref{def:basis_whitening}, which describes the process of orthogonalizing the sketched basis $\bm{SV}_m$.
The following is a direct consequence of \Cref{prop:deterministic_subspace_embedding}.
\begin{corollary}\label{cor:whitened_embedding}
	Let $\bm{V}_m\in\C^{n\times m}$ with $n\geq m$ have full rank, $\bm{S}\in\C^{s\times n}$ be an arbitrary embedding matrix, and $\bm{SV}_m=\bm{Q}_m\bm{R}_m$ a thin QR-decomposition.
	Then, the embedding of the subspace $\mathcal{V}\subset\C^n$ spanned by $\bm{V}_m$ after basis whitening satisfies
	\begin{equation}\label{eq:whitened_subspace_embedding}
	\frac{1}{\sigma_{\max}^2(\bm{V}_m\bm{R}_m^{-1})} \|\bm{v}\|_2^2 \leq \|\bm{Sv}\|_2^2 \leq \frac{1}{\sigma_{\min}^2(\bm{V}_m\bm{R}_m^{-1})} \|\bm{v}\|_2^2.
	\end{equation}
	Moreover, the subspace distortion factor \eqref{eq:distortion_cond_numbers} is transformed into
	\begin{equation}\label{eq:cond_V_R} \kappa(\bm{V}_m\bm{R}_m^{-1}) = \frac{\sigma_{\max}(\bm{V}_m\bm{R}_m^{-1})}{\sigma_{\min}(\bm{V}_m\bm{R}_m^{-1})}.
	\end{equation}
\end{corollary}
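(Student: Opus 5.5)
The plan is to apply \Cref{prop:deterministic_subspace_embedding} with the whitened basis $\widetilde{\bm{V}}_m=\bm{V}_m\bm{R}_m^{-1}$ in place of $\bm{V}$. First I check that this is allowed. Since $\bm{V}_m$ has full rank $m$, the product $\bm{SV}_m$ may still be rank deficient for an arbitrary $\bm{S}$. In that case $\bm{R}_m$ is singular and the whitened basis is undefined. So I will assume, as \Cref{def:basis_whitening} implicitly does, that $\bm{SV}_m$ has full column rank. Then $\bm{R}_m$ is invertible, and $\widetilde{\bm{V}}_m$ has full rank with $\text{Ran}(\widetilde{\bm{V}}_m)=\text{Ran}(\bm{V}_m)=\V$. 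Hence $\widetilde{\bm{V}}_m$ is a legitimate (generally non-orthogonal) basis of the same subspace, and \Cref{prop:deterministic_subspace_embedding} applies to it verbatim.

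Next I compute the sketched quantities. We have $\bm{S}\widetilde{\bm{V}}_m=\bm{SV}_m\bm{R}_m^{-1}=\bm{Q}_m$, and $\bm{Q}_m$ has orthonormal columns. Therefore all its singular values equal one, so $\sigma_{\min}(\bm{S}\widetilde{\bm{V}}_m)=\sigma_{\max}(\bm{S}\widetilde{\bm{V}}_m)=1$ and $\kappa(\bm{S}\widetilde{\bm{V}}_m)=1$. Substituting these values into \eqref{eq:general_subspace_embedding} with $\bm{V}=\widetilde{\bm{V}}_m$ gives \eqref{eq:whitened_subspace_embedding} directly.

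For the distortion factor, I substitute the same values into \eqref{eq:distortion_cond_numbers}. This gives $\kappa(\bm{S}\widetilde{\bm{V}}_m)\kappa(\widetilde{\bm{V}}_m)=\kappa(\bm{V}_m\bm{R}_m^{-1})$, which is \eqref{eq:cond_V_R}.

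There is no real obstacle; the argument is a substitution. The only point requiring care is the invertibility of $\bm{R}_m$, i.e.\ that $\bm{S}$ does not annihilate any nonzero vector of $\V$. I would state this explicitly as part of the hypothesis that $\bm{S}$ is an embedding for $\V$. I would also note that the result does not depend on the choice of thin QR factorization: a different choice changes $\bm{R}_m$ by a unitary diagonal factor, which leaves the singular values of $\bm{V}_m\bm{R}_m^{-1}$ unchanged.
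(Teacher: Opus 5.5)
Your proposal is correct and matches the paper's intended argument: the corollary is stated as a direct consequence of \Cref{prop:deterministic_subspace_embedding}, applied to the whitened basis $\bm{V}_m\bm{R}_m^{-1}$, using that $\bm{S}\bm{V}_m\bm{R}_m^{-1}=\bm{Q}_m$ has all singular values equal to one. Your remarks that $\bm{SV}_m$ must have full column rank, so that $\bm{R}_m$ is invertible, and that the result does not depend on the choice of thin QR factor, are useful clarifications that the paper leaves implicit.
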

Note that in contrast to \eqref{eq:epsilon_subspace_embedding}, the relations \eqref{eq:general_subspace_embedding}, \eqref{eq:whitened_subspace_embedding}, and \eqref{eq:cond_V_R} hold with probability $1$ for a fixed matrix $\bm{S}$.

\begin{remark}\label{rem:JL_distortion}
\Cref{cor:whitened_embedding} can be taken as starting point for evaluating the suitability of arbitrary matrices $\bm{S}\in\C^{s\times n}$ as sketching matrix for problems involving the basis $\bm{V}_m$.
Interestingly, the relation
\begin{equation*}
\kappa(\bm{V}_m\bm{R}_m^{-1}) \leq \sqrt{\frac{1+\epsilon}{1-\epsilon}}
\end{equation*}
that holds with high probability for any realization of a Johnson--Lindenstrauss transform, cf., e.g., \cite[Cor.~2.2]{balabanov2022randomized} and \cite[Prop.~2.1]{palitta2025sketched}, follows from \Cref{cor:whitened_embedding} as the special case in which \mbox{$\frac{1}{\sigma_{\max}^2(\bm{V}_m\bm{R}_m^{-1})}=1-\epsilon$} and $\frac{1}{\sigma_{\min}^2(\bm{V}_m\bm{R}_m^{-1})}=1+\epsilon$ hold with high probability, cf.~\eqref{eq:epsilon_subspace_embedding} and \eqref{eq:whitened_subspace_embedding}.
%However, using one fixed realization of a Johnson--Lindenstrauss transform (e.g., by fixing the random seed) permits the evaluation of \eqref{eq:cond_V_R} to obtain the corresponding subspace distortion factor that holds with probability 1.
\end{remark}

More generally, one may now choose some family of (randomized or deterministic) sketching matrices and identify an optimal member minimizing the induced subspace distortion \eqref{eq:cond_V_R}.
Assuming the singular values of $\bm{V}_m$ to be fixed, the goal becomes to choose $\bm{S}$ such that the \emph{sketched} basis $\bm{SV}_m$ matches the \emph{original} basis $\bm{V}_m$ as closely as possible in a spectral sense\footnote{Note that by the thin QR-decomposition $\bm{SV}_m=\bm{Q}_m\bm{R}_m$, the singular values of $\bm{SV}_m$ and $\bm{R}_m$ coincide.}.

To this end, we separately consider numerator and denominator of \eqref{eq:cond_V_R}.
We have
$$
\sigma_{\max}(\bm{V}_m\bm{R}_m^{-1}) \leq  \sigma_{\max}(\bm{V}_m)\sigma_{\max}(\bm{R}_m^{-1}) = \frac{\sigma_{\max}(\bm{V}_m)}{\sigma_{\min}(\bm{R}_m)} = \frac{\sigma_{\max}(\bm{V}_m)}{\sigma_{\min}(\bm{SV}_m)},
$$
which can be minimized by maximizing $\sigma_{\min}(\bm{SV}_m)$ with respect to $\bm{S}$.
Moreover, since we have $\bm{V}_m\in\C^{n\times m}$ with $n\geq m$, it holds that
$$
\sigma_{\min}(\bm{V}_m\bm{R}_m^{-1}) \geq \sigma_{\min}(\bm{V}_m)\sigma_{\min}(\bm{R}_m^{-1}) = \frac{\sigma_{\min}(\bm{V}_m)}{\sigma_{\max}(\bm{R}_m)} = \frac{\sigma_{\min}(\bm{V}_m)}{\sigma_{\max}(\bm{SV}_m)},
$$
which can be maximized by minimizing $\sigma_{\max}(\bm{SV}_m)$ with respect to $\bm{S}$.
We now turn to deciding, which of the two conditions has a bigger impact on the magnitude of \eqref{eq:cond_V_R}.

\begin{lemma}\label{lem:sigma_max_V}
	Let $\bm{V}_m\in\C^{n\times m}$ with $n\geq m$ be a (generally non-orthogonal) matrix with normalized columns.
	Then, we have
	$$
	\sigma_{\max}(\bm{V}_m)\leq m^{3/4}.
	$$
\end{lemma}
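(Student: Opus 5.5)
The plan is to bound the spectral norm by norms that only see the column lengths. Since every column $\bm{v}_j$ of $\bm{V}_m$ has $\|\bm{v}_j\|_2=1$, the Frobenius norm satisfies $\|\bm{V}_m\|_F^2=\sum_{j=1}^m\|\bm{v}_j\|_2^2=m$. This immediately gives
$$
\sigma_{\max}(\bm{V}_m)=\|\bm{V}_m\|_2\leq\|\bm{V}_m\|_F=\sqrt{m}.
$$
Because $m\geq 1$, we have $\sqrt{m}=m^{1/2}\leq m^{3/4}$, and the stated bound follows.

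If instead I wanted to reproduce an exponent like $3/4$ directly, I would use the interpolation route $\|\bm{V}_m\|_2\leq\sqrt{\|\bm{V}_m\|_1\|\bm{V}_m\|_\infty}$.
\begin{itemize}
\item For the column-sum norm, each column has unit $2$-norm and hence $1$-norm at most $\sqrt{n}$. That estimate depends on $n$, so it is useless here.
\item A better choice is the Gram matrix. Write $\sigma_{\max}^2(\bm{V}_m)=\lambda_{\max}(\bm{V}_m^\ast\bm{V}_m)$. The Gram matrix is $m\times m$ with unit diagonal and, by Cauchy--Schwarz, off-diagonal entries of modulus at most $1$.
\item By Gershgorin, or by $\lambda_{\max}\leq\|\cdot\|_F$, this gives $\lambda_{\max}\leq m$, and hence again $\sigma_{\max}\leq\sqrt{m}\leq m^{3/4}$.
\end{itemize}

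So the key steps are: (i) express $\sigma_{\max}^2(\bm{V}_m)$ through either $\|\bm{V}_m\|_F^2$ or $\lambda_{\max}(\bm{V}_m^\ast\bm{V}_m)$; (ii) use column normalization to bound this quantity by $m$; (iii) compare $\sqrt{m}$ with $m^{3/4}$.

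There is no real obstacle. The only point requiring care is that the bound must be independent of $n$, which rules out entrywise or row-based estimates. The Frobenius or Gram-matrix argument avoids this and in fact gives the sharper bound $\sqrt{m}$, which is attained when all columns coincide. The weaker exponent $3/4$ in the statement is presumably an artifact of a cruder chain of inequalities in the paper's own proof, such as a mixed $1$/$\infty$-norm bound.
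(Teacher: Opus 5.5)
Your proof is correct, and your main argument takes a different route from the paper's. The paper works through the Gram matrix. It observes that every entry of $\bm{V}_m^\ast\bm{V}_m$ has modulus at most $1$, and then bounds $\sigma_{\max}^2(\bm{V}_m)=\|\bm{V}_m^\ast\bm{V}_m\|_2\le\sqrt{m}\,\|\bm{V}_m^\ast\bm{V}_m\|_\infty\le\sqrt{m}\cdot m=m^{3/2}$. The exponent $3/4$ therefore comes from the norm-equivalence factor $\sqrt{m}$ between the spectral norm and the max-row-sum norm, rather than from a $1$/$\infty$ interpolation. That factor is unnecessary: the Gram matrix is Hermitian, so its spectral norm equals its spectral radius, which is bounded by any induced norm. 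Hence $\|\bm{V}_m^\ast\bm{V}_m\|_2\le\|\bm{V}_m^\ast\bm{V}_m\|_\infty\le m$, which is exactly your Gershgorin variant. Your Frobenius-norm argument avoids the Gram matrix altogether, takes one line, and gives the sharp bound $\sqrt{m}$, attained when all columns coincide. It therefore implies the lemma as stated with room to spare. The paper's phrasing through the entries of $\bm{V}_m^\ast\bm{V}_m$ gains nothing beyond a loose link to its later remark on near-collinear columns, and it costs an extra factor $m^{1/4}$.
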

\begin{proof}
	Since the columns of $\bm{V}_m$ are normalized, all entries of $\bm{V}_m^\ast\bm{V}_m\in\C^{m\times m}$ have absolute value less than or equal to $1$.
	Then,
	\begin{equation*}
	\sigma_{\max}^2(\bm{V}_m) = \|\bm{V}_m^\ast\bm{V}_m\|_2 \leq \sqrt{m}\|\bm{V}_m^\ast\bm{V}_m\|_\infty \leq \sqrt{m} m = m^{3/2}. \qed
	\end{equation*}
\end{proof}

\begin{remark}\label{rem:sigma_min_V}
\begin{sloppypar}
The smallest singular value $\sigma_{\min}(\bm{V}_m)$ for a full-rank matrix \mbox{$\bm{V}_m\in\C^{n\times m}$} with $n\geq m$ may be arbitrarily small in the case of near-colinearity of its columns.
Depending on the distortion of Euclidean norms induced by the sketching matrix $\bm{S}$, this observation as well as \Cref{lem:sigma_max_V} carry over to $\sigma_{\min}(\bm{SV}_m)$ and $\sigma_{\max}(\bm{SV}_m)$ modulo the subspace distortion factor.
It follows that maximizing $\sigma_{\min}(\bm{SV}_m)$ with respect to $\bm{S}$ is a suitable objective to minimize \eqref{eq:cond_V_R}.
\end{sloppypar}
\end{remark}

In summary, this section derives a criterion for evaluating the efficacy of arbitrary matrices $\bm{S}\in\C^{n\times m}$ as sketching matrices.
In particular, sketching matrices for non-orthogonal Krylov bases $\bm{V}_m$ should be designed such that the sketched basis $\bm{SV}_m$ is close to $\bm{V}_m$ in a spectral sense to yield small subspace distortion factors \eqref{eq:cond_V_R}.
Since the largest singular values of $\bm{V}_m$ and $\bm{SV}_m$ are moderate, cf.~\Cref{lem:sigma_max_V,rem:sigma_min_V}, we conclude with the objective
\begin{equation}\label{eq:objective_max_sigma_min}
\max_{\bm{S}} \sigma_{\min}(\bm{SV}_m),
\end{equation}
to be optimized over a fixed family of sketching matrices.
The importance of minimizing subspace distortion factors \eqref{eq:cond_V_R} can be seen at the example of the error bounds \eqref{eq:sFOM_error}, \eqref{eq:sGMRES_error}, and \eqref{eq:sRR_error} for Krylov methods sketched with Johnson--Lindenstrauss transforms.
Here, the distortion factor $\kappa(\bm{V}_m\bm{R}_m^{-1}) \leq \sqrt{\frac{1+\epsilon}{1-\epsilon}}$ enters as a factor by which the output of a sketched method deviates from the output of its unsketched counterpart.

\section{Deterministic sketching via row subset selection}\label{sec:deterministic_sketching}

This section proposes an alternative approach to sketching in numerical linear algebra.
Instead of drawing sketching matrices from random matrix distributions that satisfy subspace embeddings \emph{with high probability}, we propose the use of \emph{deterministic} sketching matrices satisfying the same property \emph{with probability 1}.

In the spirit of derandomizing stochastic trace estimators, cf.~\Cref{fig:trace_estimation} and its description in \Cref{sec:intro}, we propose to invest upfront computations to generate problem-specific sketching matrices, which are then extremely cheap to apply in sketched Krylov methods.

Devising such deterministic sketching matrices builds upon the analysis from \Cref{sec:subspace_embeddings} that identifies \eqref{eq:objective_max_sigma_min} as an objective to generate suitable sketching matrices for a given non-orthogonal Krylov basis $\bm{V}_m$.

Among an infinity of options, we choose the family of row subset selection matrices.
This is motivated by the fact that \emph{randomized} row subset selection is used both as the first component of SRFT's \eqref{eq:srft} and as standalone sketching approach \cite[Sec.~9.6]{martinsson2020randomized} as well as analogies between \Cref{sec:subspace_embeddings} and model order reduction, cf.~\Cref{sec:deim_qdeim,sec:over-sampling}.

\begin{definition}\label{def:RSS_sketching_matrix}
	For a vector of $s\in\N$ non-repeated indices $\bm{p}\in\{1,\dots,n\}^s$, the corresponding deterministic row subset selection sketching matrix is defined as
	\begin{equation}
	\bm{S}=\bm{I}(\bm{p},:)=[\bm{e}_{\bm{p}_1},\dots,\bm{e}_{\bm{p}_s}]^\ast\in\{0,1\}^{s\times n}.
	\end{equation}
\end{definition}

Once the index vector $\bm{p}$ has been determined by upfront computations, applying $\bm{S}$ for extracting $s$ rows from a vector is an extremely fast $\mathcal{O}(s)$ operation.
Although it is not the case in the methods considered in this manuscript, this approach would be particularly efficient for methods that require many applications of the sketching matrix.

In contrast to \Cref{def_oblivious_subspace_embedding}, deterministic row subset selection sketching matrices give rise to somewhat different subspace embeddings.

\begin{lemma}
	Let $\bm{S}\in\{0,1\}^{s\times n}$ be a deterministic row subset selection sketching matrix as defined in \Cref{def:RSS_sketching_matrix}.
	Then, for any $\bm{v}\in\V$ and any $\V\subset\C^n$ there exist $\epsilon,\delta\in[0,1]$ with $\epsilon\geq\delta$ such that
	\begin{equation}\label{eq:det_epsilon_delta_embedding}
	(1-\epsilon)\|\bm{v}\|_2^2 \leq \|\bm{Sv}\|_2^2 \leq (1-\delta) \|\bm{v}\|_2^2,
	\end{equation}
	holds with probability 1.
	The constants $\epsilon$ and $\delta$ depend on $\bm{S}$ as well as the basis of $\V$.
\end{lemma}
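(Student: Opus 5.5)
The plan is to exploit the fact that a row subset selection matrix is a partial isometry. Its rows are distinct standard basis vectors, so $\bm{S}\bm{S}^\ast=\bm{I}\in\C^{s\times s}$, and $\bm{S}^\ast\bm{S}=\bm{P}$ is the diagonal orthogonal projector onto the coordinates indexed by $\bm{p}$. Hence for every $\bm{v}\in\C^n$
\begin{equation*}
\|\bm{Sv}\|_2^2=\sum_{i=1}^s |\bm{v}_{\bm{p}_i}|^2 = \bm{v}^\ast\bm{P}\bm{v}\leq \|\bm{v}\|_2^2 .
\end{equation*}
This already gives the upper bound with $\delta=0$. The key structural difference from \eqref{eq:epsilon_subspace_embedding} is that no expansion is possible, so the upper constant can be written as $1-\delta$ with $\delta\geq 0$.

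To make the constants depend on $\bm{S}$ and a basis $\bm{V}$ of $\V$, as the statement asserts, I will invoke \Cref{prop:deterministic_subspace_embedding}, or equivalently \Cref{cor:whitened_embedding}, for this particular $\bm{S}$. This yields
\begin{equation*}
1-\epsilon:=\frac{\sigma_{\min}^2(\bm{SV})}{\sigma_{\max}^2(\bm{V})}, \qquad 1-\delta:=\min\Bigl\{1,\frac{\sigma_{\max}^2(\bm{SV})}{\sigma_{\min}^2(\bm{V})}\Bigr\}.
\end{equation*}
Alternatively, I can take the sharp constants given by the extremal Rayleigh quotients $\min_{\bm{v}\in\V\setminus\{\bm{0}\}}\|\bm{Sv}\|_2^2/\|\bm{v}\|_2^2$ and $\max_{\bm{v}\in\V\setminus\{\bm{0}\}}\|\bm{Sv}\|_2^2/\|\bm{v}\|_2^2$. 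These equal the extreme eigenvalues of $\bm{Q}^\ast\bm{P}\bm{Q}$, where $\bm{Q}$ is an orthonormal basis of $\V$.

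The remaining steps check the ranges. First, $1-\epsilon\geq 0$ because it is a ratio of squared norms, and $1-\epsilon\leq 1-\delta$ because a minimum cannot exceed a maximum. Second, $1-\delta\leq 1$ by the projector bound above, and $1-\delta\geq 0$ trivially. Together these give $\epsilon,\delta\in[0,1]$ and $\epsilon\geq\delta$. The inequalities are deterministic facts about the fixed matrix $\bm{S}$, so they hold with probability $1$.

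The only delicate point is the degenerate case $\V\cap\ker(\bm{S})\neq\{\bm{0}\}$, where the lower constant becomes $0$, i.e., $\epsilon=1$. This explains why the statement allows $\epsilon\in[0,1]$ rather than $[0,1)$, and I will remark on it explicitly. Beyond this, the main thing to get right is the direction of the projector inequality $\bm{P}\preceq\bm{I}$, which is what forces the upper constant to be of the form $1-\delta$ with $\delta\geq 0$.
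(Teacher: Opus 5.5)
Your proposal is correct and follows essentially the same route as the paper. The upper constant is capped at $1$ because $\bm{S}$ merely extracts entries of $\bm{v}$, which your identity $\bm{S}^\ast\bm{S}=\bm{P}\preceq\bm{I}$ makes precise, and the basis-dependent constants come from \Cref{prop:deterministic_subspace_embedding} and \Cref{cor:whitened_embedding}; your clamp $1-\delta=\min\{1,\sigma_{\max}^2(\bm{SV})/\sigma_{\min}^2(\bm{V})\}$ and the explicit range checks just spell out what the paper's two-line proof leaves implicit.
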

\begin{proof}
	We clearly have $0\leq\frac{\|\bm{Sv}\|_2^2}{\|\bm{v}\|_2^2}\leq 1$ since $\bm{S}$ extracts a subset of the entries of $\bm{v}$.
	The dependence of $\epsilon$ and $\delta$ on $\bm{S}$ and the basis $\bm{V}_m$ of $\V$ follows from \Cref{prop:deterministic_subspace_embedding} and \Cref{cor:whitened_embedding}.
\end{proof}

\begin{sloppypar}
An unusual property of \eqref{eq:det_epsilon_delta_embedding} is the systematic Euclidean length reduction \mbox{$\|\bm{Sv}\|_2\leq\|\bm{v}\|_2$}, while Johnson--Lindenstrauss transforms guarantee length preservation in expectation, i.e., $\mathbb{E}\|\bm{Sv}\|_2 = \|\bm{v}\|_2$.
A similar property may be restored for \eqref{eq:det_epsilon_delta_embedding} by scaling the matrix $\bm{S}$ by a factor $c\in\R$ that, e.g., guarantees norm preservation in expectation over random linear combinations of basis elements of $\V$.
Such factor would, however, be inconsequential for the corresponding subspace distortion \eqref{eq:cond_V_R} as well as the sketched approximations \eqref{eq:sFOM}, \eqref{eq:sGMRES}, and \eqref{eq:sRR} since basis whitening\footnote{and in the case of sGMRES, the QR decomposition $\bm{SAV}_m=\bm{Q}_m\bm{R}_m$} would merely introduce the factor $\frac{1}{c}$ in $\bm{R}_m^{-1}$, cf.~\Cref{def:basis_whitening}.
\end{sloppypar}

\subsection{DEIM and Q-DEIM}\label{sec:deim_qdeim}

This subsection introduces two strategies for choosing an index vector $\bm{p}$ from \Cref{def:RSS_sketching_matrix} of length $s=m$.
Both have been proposed in the context of model order reduction, which is briefly described in \Cref{sec:MOR}.

The objective of the two methods is the maximization of $\sigma_{\min}(\bm{SV}_m)$, cf.~\Cref{sec:subspace_embeddings}, over the family of row subset selection matrices $\bm{S}\in\C^{s\times n}$ for a given basis $\bm{V}_m\in\C^{n\times m}$ of an $m$-dimensional subspace of $\C^n$.
In particular, we consider non-orthogonal Krylov bases $\bm{V}_m$ although the basis is typically assumed to be orthonormal in the discussed model order reduction context and the methods' analyses.
Orthogonality is, however, not an algorithmic requirement as both methods are equivalent to partial %left-looking
pivoting %without replacement
in computing %LU 
matrix decompositions \cite{sorensen2016deim,drmac2016new}, which applies to general matrices.

As first method, the discrete empirical interpolation method (DEIM) summarized in \Cref{alg:deim} iteratively processes the columns of the basis $\bm{V}_m$.
In each iteration $j$, one row index $\bm{p}_j$ is selected via the largest magnitude entry (line 6) of a residual between the current column $\bm{v}_j$ and its current approximation $\bm{Vc}$ (line 5).
Eventually, DEIM outputs the index vector $\bm{p}\in\{1,\dots,n\}^m$ where the corresponding row subset selection matrix $\bm{S}\in\{0,1\}^{s\times n}$ extracts $s=m$ linearly independent rows from $\bm{V}_m$, guaranteeing $\sigma_{\mathrm{min}}(\bm{SV}_m)>0$.

\begin{algorithm}[t]%[htb!]
%	\vspace{0.3em}
	\begin{tabular}{lll}
		Input:
		& $\bm{V}_m=[\bm{v}_1,\dots,\bm{v}_m]\in\C^{n \times m},$ & Full-rank basis.\vspace{.3em}\\
		Output: & $\bm{p}\in\{1,\dots,n\}^m$, & Vector of non-repeated row indices.
	\end{tabular}
	\vspace{.3em}
	\begin{algorithmic}[1]
		\State $\bm{p}_1 = \arg\max |\bm{v}_1|$
		\State $\bm{V} = \bm{v}_1, \bm{S} = \bm{e}_{\bm{p}_1}^\top, \bm{p} = \bm{p}_1$
		\For{$j = 2,\dots,m$}
		\State Solve $(\bm{SV})\bm{c} = \bm{Sv}_j$ for $\bm{c}$
		\State $\bm{r} = \bm{v}_j - \bm{Vc}$
		\State $\bm{p}_j = \arg\max |\bm{r}|$
		\State $\bm{V}\leftarrow[\bm{V},\bm{v}_j], \bm{S}\leftarrow\begin{bmatrix}
		\bm{S}\\\bm{e}_{\bm{p}_j}^\top
		\end{bmatrix},
		\bm{p}\leftarrow\begin{bmatrix}
		\bm{p}\\\bm{p}_j
		\end{bmatrix}$
		\EndFor
	\end{algorithmic}
	\caption{Discrete Empirical Interpolation Method (DEIM).}\label{alg:deim}
\end{algorithm}

In the model order reduction context, DEIM aims to approximate vectors $\bm{f}\in\C^n$ in the subspace spanned by $\bm{V}_m$ via $\bm{f} = \bm{V}_m\bm{c}$ with coefficient vector $\bm{c} = (\bm{S}\bm{V}_m)^{-1}\bm{S}\bm{f}$, cf.~\Cref{sec:MOR}.
Its goal is the minimization of the approximation error \cite[Lem.~3.2]{chaturantabut2010nonlinear}
$$
\|\bm{f} - \bm{V}_m(\bm{S}\bm{V}_m)^{-1}\bm{Sf}\|_2 \leq \|(\bm{S}\bm{V}_m)^{-1}\|_2 \|(\bm{I}-\bm{V}_m\bm{V}_m^\ast)\bm{f}\|_2,
$$
which deviates from the best approximation in the subspace spanned by $\bm{V}_m$ by the factor $\|(\bm{S}\bm{V}_m)^{-1}\|_2 = \lambda_{\max}((\bm{S}\bm{V}_m)^{-1}) = \frac{1}{\lambda_{\min}(\bm{S}\bm{V}_m)}$.
It turns out that DEIM minimizes this factor locally in each iteration \cite{chaturantabut2010nonlinear}, which is equivalent to maximizing $\sigma_{\min}(\bm{S}\bm{V}_m)$, making DEIM a good candidate for optimizing the objective \eqref{eq:objective_max_sigma_min} over the family of row subset selection matrices.
\emph{A-priori} bounds on $\|(\bm{S}\bm{V}_m)^{-1}\|_2$ are available \cite{chaturantabut2010nonlinear}, but typically too loose to be of practical use.

The observation that DEIM is equivalent to partial left-looking pivoting without replacement in computing  LU decompositions \cite{sorensen2016deim,drmac2016new} led to a ``surprisingly simple and effective'' \cite[Sec.~1.4]{drmac2016new} variant of DEIM, designated Q-DEIM.
As a second method, it chooses the vector of row indices $\bm{p}$ as the first $m$ pivots of a pivoted QR decomposition of $\bm{V}_m^\ast$, cf.~\Cref{alg:q-deim}.
For orthonormal bases, Q-DEIM yields improved \emph{a-priori} bounds on $\|(\bm{S}\bm{V}_m)^{-1}\|_2$ and is independent under orthogonal transformations of the basis $\bm{V}_m$ \cite{drmac2016new}.

\begin{algorithm}[t]%[htb!]
%	\vspace{0.3em}
	\begin{tabular}{lll}
		Input:
		& $\bm{V}_m=[\bm{v}_1,\dots,\bm{v}_m]\in\C^{n \times m},$ & Full-rank basis.\vspace{.3em}\\
		Output: & $\bm{p}\in\{1,\dots,n\}^m$, & Vector of row non-repeated indices.
	\end{tabular}
	\vspace{.3em}
	\begin{algorithmic}[1]
		\State [$\sim$,$\sim$,$\bm{P}$] = qr($\bm{V}_m^\ast$,`vector')
		\State $\bm{p}$ = $\bm{P}$(1:m)
	\end{algorithmic}
	\caption{QR decomposition-based Discrete Empirical Interpolation Method (Q-DEIM).}\label{alg:q-deim}
\end{algorithm}

The computational complexity of DEIM depends on the strategy for solving the linear system in line 4 of \Cref{alg:deim}, cf.~\eqref{eq:DEIM_linear_system}.
Assuming the loop over the matrix vector product $\bm{Vc}$ in line 5 of \Cref{alg:deim} to be $\mathcal{O}(nm^2)$, computing a new LU decomposition of $\bm{SV}$ from scratch in each iteration adds $\mathcal{O}(m^4)$ \cite{chaturantabut2010nonlinear} while updating the previous LU decomposition adds $\mathcal{O}(m^3)$ \cite{drmac2016new}.
Using Householder-based QR factorizations, the computational complexity of Q-DEIM is $\mathcal{O}(nm^2)$ \cite{drmac2016new}.

The cost of applying the resulting row subset selection matrix $\bm{S}$, cf.~\Cref{def:RSS_sketching_matrix}, to a vector is $\mathcal{O}(m)$.
Together with the upfront cost of DEIM or Q-DEIM, the asymptotic scaling of this approach lies above that of applying different SRFTs, which is stated as $\mathcal{O}(n\log(s))$, $\mathcal{O}(ns\log(s))$, or $\mathcal{O}(n\log(n))$ in the literature, although $s$ is typically chosen somewhat larger than $m$.
Numerical experiments in \Cref{sec:numerics_fAb} indicate that runtimes of the deterministic and randomized sketching approaches are (at least) comparable in practice.
Moreover, the row index vector $\bm{p}$ computed with DEIM or Q-DEIM is a unique property of the non-orthogonal basis $\bm{V}_m$ of the Krylov subspace $\mathcal{K}_m(\bm{A},\bm{b})$ computed with the $k$-truncated Arnoldi for fixed $m,k\in\N$.
This implies that deterministic sketching with row subset selection matrices could represent a cheaper alternative to randomized sketching with SRFTs when many applications of the sketching matrix are required for a fixed quadruple $(\bm{A},\bm{b},m,k)$.

\subsection{Over-sampling}\label{sec:over-sampling}

DEIM and Q-DEIM introduced in \Cref{sec:deim_qdeim} address the objective $\max_{\bm{S}} \sigma_{\min}(\bm{S}\bm{V}_m)$ over the family of row subset selection matrices.
By construction, both methods are limited to the sketch size $s=m$ while it is customary in randomized sketching to over-sample by, e.g., $s=2m$ or $s=4m$, cf.~\cite{nakatsukasa2024fast,guttel2023randomized}.

This subsection presents the idea underlying the two methods fast greedy missing point estimation (MPE) \cite{zimmermann2016accelerated} and Gappy proper orthogonal decomposition plus eigenvector (GappyPOD+E) \cite{peherstorfer2020stability}.
These can be used to over-sample in the deterministic row subset selection case, i.e., to extend an index vector $\bm{p}\in\{1,\dots,n\}^m$ obtained by either DEIM or Q-DEIM to $\bm{p}\in\{1,\dots,n\}^s$ with $s>m$ with the goal of further increasing $\sigma_{\min}(\bm{SV}_m)$.

Each step of both methods \cite{zimmermann2016accelerated,peherstorfer2020stability} starts from a given $\bm{S}\in\{0,1\}^{s\times n}$ and formulates the problem of selecting an additional \emph{row} $\bm{v}_+$ of $\bm{V}_m$ via a symmetric rank-1 update of an eigenvalue problem.
With the thin singular value decomposition (SVD) $\bm{S}\bm{V}_m=\bm{U}_m\bm{\Sigma}_m\bm{W}_m^\ast$, one may write
\begin{equation*}
\begin{bmatrix}
\bm{S}\bm{V}_m\\\bm{v}_+
\end{bmatrix}
=
\begin{bmatrix}
\bm{U}_m & \bm{0}\\\bm{0}^\ast & 1
\end{bmatrix}
\begin{bmatrix}
	\bm{\Sigma}_m\\\bm{v}_+\bm{W}_m
\end{bmatrix}
\bm{W}_m^\ast
=
\begin{bmatrix}
\bm{U}_m\bm{\Sigma}_m\\\bm{v}_+\bm{W}_m
\end{bmatrix}
\bm{W}_m^\ast.
\end{equation*}
And since $\sigma_i^2(\begin{bmatrix}
\bm{S}\bm{V}_m\\\bm{v}_+
\end{bmatrix}) = \lambda_i(\begin{bmatrix}
\bm{S}\bm{V}_m\\\bm{v}_+
\end{bmatrix}^\ast\begin{bmatrix}
\bm{S}\bm{V}_m\\\bm{v}_+
\end{bmatrix})$ for all $i=1,\dots,m$, considering
$$
\begin{bmatrix}
\bm{S}\bm{V}_m\\\bm{v}_+
\end{bmatrix}^\ast\begin{bmatrix}
\bm{S}\bm{V}_m\\\bm{v}_+
\end{bmatrix} = \bm{W}_m\left(\bm{\Sigma}_m^2+(\bm{v}_+\bm{W}_m)^\ast(\bm{v}_+\bm{W}_m)\right)\bm{W}_m^\ast
$$
characterizes the singular values of $\begin{bmatrix}
\bm{S}\bm{V}_m\\\bm{v}_+
\end{bmatrix}$ as the square roots of the eigenvalues of $\left(\bm{\Sigma}_m^2+(\bm{v}_+\bm{W}_m)^\ast(\bm{v}_+\bm{W}_m)\right)$.
Weyl's inequalities for symmetric rank-1 additions \cite[Cor.~4.3.9]{horn2012matrix} guarantee that the addition of $\bm{v}_+$ will either increase $\sigma_{\min}(\bm{S}\bm{V}_m)$ or leave it unchanged.
Fast greedy MPE and GappyPOD+E now utilize different bounds on the eigenvalues of the symmetric rank-1 addition with the goal of choosing the row $\bm{v}_+$ that increases $\sigma_{\min}(\bm{S}\bm{V}_m)$ the most.
Since each row addition requires the computation of an SVD of size $s\times m$, the runtime of these methods is dominated by $\mathcal{O}(s^2m^2)$.
For additional details, we refer to \cite{zimmermann2016accelerated,peherstorfer2020stability}.

Interestingly, GappyPOD+E was designed to decrease the required number of rows of $\bm{V}_m$ to achieve a given value of $\sigma_{\min}(\bm{S}\bm{V}_m)$ with fewer deterministically selected rows compared to a randomized row subset selection strategy \cite{peherstorfer2020stability}.
Numerical experiments not included in this manuscript show that purely random row subset selection leads to very slow convergence in \Cref{alg:dsFOM,alg:dsGMRES,alg:dsRR}, even in the fortunate event of drawing a full-rank sketch $\bm{S}\bm{V}_m$.

Numerical experiments not reported in this manuscript showed a very similar performance of MPE and GappyPOD+E.
In most situations, over-sampling the \mbox{Q-DEIM} row indices by MPE or GappyPOD+E by at least one row is required to obtain acceptable values of $\sigma_{\mathrm{min}}(\bm{SV}_m)$ while row indices obtain by DEIM were sometimes found to be sufficient.
\Cref{sec:numerics} reports additional empirical findings on the required degree of over-sampling.

\subsection{Adaptivity and Parallelism}

\begin{sloppypar}
Invoking fast greedy MPE or \mbox{GappyPOD+E}, cf.~\Cref{sec:over-sampling}, in addition to DEIM or Q-DEIM, cf.~\Cref{sec:deim_qdeim}, entails additional computational effort.
The choice of $s\in\N$ hence represents a trade-off between fast runtimes and large values of $\sigma_{\min}(\bm{S}\bm{V}_m)$, which translate into small subspace distortion factors $\kappa(\bm{V}_m\bm{R}_m^{-1})$, cf.~\Cref{sec:subspace_embeddings}.
\end{sloppypar}

It would therefore seem natural to choose the sketch size $s\in\N$ adaptively until some target subspace distortion factor $\kappa(\bm{V}_m\bm{R}_m^{-1})$ is reached if such preference is available.

A possible increase of computational efficiency in using DEIM, cf.~\Cref{alg:deim}, could exploit the fact that it \emph{processes} the columns of the Krylov basis in an iterative fashion.
This allows its parallelization with the $k$-truncated Arnoldi method, cf.~\Cref{alg:k_trunc_arnoldi}, which iteratively \emph{produces} the columns to be fed into DEIM.
A naive implementation using Matlab's \texttt{spmd} did not achieve speed-ups on the examples considered in \Cref{sec:numerics}.
Studying, whether a more sophisticated parallel treatment holds the potential for an increase of computational runtimes would be an interesting road for future research.

\section{Algorithms}\label{sec:algorithms}

This section proposes the deterministically sketched Krylov subspace methods \emph{dsFOM} for approximating the action of a matrix function on a vector, \emph{dsGMRES} for approximately solving a non-Hermitian linear system, and \emph{dsRR} for approximating a subset of the eigenvalues and -vectors of a matrix.
These are obtained as versions of the randomly sketched Krylov subspace methods sFOM \cite{guttel2023randomized}, sGMRES \cite{nakatsukasa2024fast}, and sRR \cite{nakatsukasa2024fast}, cf.~\Cref{sec:sFOM,sec:sGMRES,sec:sRR}, that replace Johnson--Lindenstrauss transforms by deterministic row subset selection sketching matrices, cf.~\Cref{sec:deterministic_sketching}.

\Cref{alg:dsFOM,alg:dsGMRES,alg:dsRR} follow the derivations in \Cref{sec:sFOM,sec:sGMRES,sec:sRR}, respectively.
In fact, sFOM, sGMRES, and sRR are recovered upon replacing lines 2-6 in \Cref{alg:dsFOM,alg:dsRR} and lines 3-7 in \Cref{alg:dsGMRES} by the set-up of a randomized Johnson--Lindenstrauss transform multiplication routine as sketching matrix $\bm{S}$.

\begin{remark}\label{rem:error_bounds}
The error bounds \eqref{eq:sFOM_error}, \eqref{eq:sGMRES_error}, and \eqref{eq:sRR_error} from the randomly sketched methods carry over to the deterministic case if the considered errors or residuals are elements of the considered Krylov subspace.
This property can not be expected in general, for instance, for the eigenvector residual $\bm{Ax}_i-\lambda_i\bm{x}_i$, we have $\bm{x}_i\in\mathcal{K}_m(\bm{A},\bm{b})$, but \mbox{$\bm{Ax}_i\notin\mathcal{K}_m(\bm{A},\bm{b})$} in general.
However, as the Krylov subspace moves towards $\bm{A}$-stationarity as its dimension $m$ is increased, the errors or residuals considered in \eqref{eq:sFOM_error}, \eqref{eq:sGMRES_error}, and \eqref{eq:sRR_error} also move towards being elements of the Krylov space.
In this case, the general subspace embedding distortion factors derived in \Cref{sec:subspace_embeddings} are approximately satisfied and using them in the place of the factor $\sqrt{\frac{1+\epsilon}{1-\epsilon}}$ in \eqref{eq:sFOM_error}, \eqref{eq:sGMRES_error}, and \eqref{eq:sRR_error} yields reasonable approximations to error bounds for the deterministically sketched methods.
\Cref{sec:numerics} illustrates this behavior at several numerical examples.
\end{remark}

\Cref{alg:dsFOM,alg:dsGMRES,alg:dsRR} represent the most basic implementation of either method as a proof of concept.
Their improvement by incorporating advanced ideas such as restarting, preconditioning, or deflation is an exciting road for future research.

\subsection{dsFOM}\label{sec:dsFOM}

\begin{algorithm}[t]%[htb!]
	%	\vspace{0.3em}
	\begin{tabular}{lll}
		Input:
		& $f:\C^{n\times n} \rightarrow \C^{n\times n},$ & Matrix function.\\
		& $\bm{A}\in\C^{n \times n},$ & Matrix.\\
		& $\bm{b}\in\C^n,$ & Vector.\\
		& $m\in\N,$ & Krylov subspace dimension.\\
		& $k\in\N,$ & Truncation length for orthogonalization.\\
		& $s\in\N,$ & Sketch size.\vspace{.3em}\\
		Output: & $\bm{f}_m\in\C^n$, & dsFOM approximation to $f(\bm{A})\bm{b}$.
	\end{tabular}
	\vspace{.3em}
	\begin{algorithmic}[1]
		\State Compute basis $\bm{V}_m$ of $\mathcal{K}_m(\bm{A},\bm{b})$ by the $k$-truncated Arnoldi (\Cref{alg:k_trunc_arnoldi})
		\State Compute $\bm{p}_m\in\N^m$ by DEIM (\Cref{alg:deim}) or Q-DEIM (\Cref{alg:q-deim})
		\If{$s>m$}
		\State Starting from $\bm{p}_m$, compute $\bm{p}\in\N^s$ by fast greedy MPE or GappyPod+E (\Cref{sec:over-sampling})
		\EndIf
		\State Set $\bm{S} = \bm{I}(\bm{p},:)$
		\State Compute thin QR decomposition $\bm{SV}_m = \bm{Q}_m\bm{R}_m$
		\State Compute approximation $\bm{f}_m = \bm{V}_m (\bm{R}_m^{-1} f(\bm{Q}_m^\ast \bm{SAV}_m \bm{R}_m^{-1}) \bm{Q}_m^\ast \bm{Sb})$
	\end{algorithmic}
	%	\vspace{.3em}
	\caption{Deterministically sketched FOM (dsFOM).}\label{alg:dsFOM}
\end{algorithm}

\begin{sloppypar}
\Cref{alg:dsFOM} starts by generating a non-orthogonal basis \mbox{$\bm{V}_m\in\C^{n\times m}$} of the Krylov subspace \eqref{eq:krylov_subspace} by \Cref{alg:k_trunc_arnoldi} for a fixed value $k\in\N$.
While small values such as $k=2$ or $k=4$ often suffice, $k$ should be increased if the triple $(\bm{A},\bm{b},m)$ leads to an ill-conditioned or numerically rank-deficient basis $\bm{V}_m$.
A last resort for generating full-rank bases $\bm{V}_m$ for a desired value of $m$ is explicit basis whitening, i.e., replacing $\bm{V}_m$ by $\widetilde{\bm{V}}_m=\bm{V}_m\bm{R}_m^{-1}$ in \Cref{alg:k_trunc_arnoldi}, however, it has been observed that subsequently extended bases rapidly become ill-conditioned again \cite{cortinovis2024speeding}.
\end{sloppypar}

The first $m$ indices of the index vector $\bm{p}\in\{1,\dots,n\}^s$ are computed by DEIM or Q-DEIM, cf.~\Cref{sec:deim_qdeim}.
If over-sampling, i.e., $s>m$ is desired, the remaining $s-m$ row indices can be obtained by either approach outlined in \Cref{sec:over-sampling}.

After assembling the basis-specific deterministic sketching matrix $\bm{S}$ from $\bm{p}$ in line 6, cf.~\Cref{def:RSS_sketching_matrix}, (implicit) basis whitening is performed, cf.~\Cref{def:basis_whitening}, before the approximation to $f(\bm{A})\bm{b}$ derived in \Cref{sec:sFOM} is evaluated via \eqref{eq:sFOM}.
Since basis whitening guarantees $\kappa(\bm{SV}_m\bm{R}_m^{-1})=\kappa(\bm{Q}_m)=1$, the error bound \eqref{eq:sFOM_error} approximately holds with the distortion factor $\kappa(\bm{V}_m\bm{R}_m^{-1})$ in the place of $\sqrt{\frac{1+\epsilon}{1-\epsilon}}$ as $\mathcal{K}_m(\bm{A},\bm{b})$ moves towards being $\bm{A}$-stationary, cf.~\Cref{rem:error_bounds}.

\subsection{dsGMRES}\label{sec:dsGMRES}

\begin{algorithm}[t]%[htb!]
	%	\vspace{0.3em}
	\begin{tabular}{lll}
		Input:
		& $\bm{A}\in\C^{n \times n},$ & Matrix.\\
		& $\bm{b}\in\C^n,$ & Vector.\\
		& $\bm{x}_0\in\C^n,$ & Initial guess.\\
		& $m\in\N,$ & Krylov subspace dimension.\\
		& $k\in\N,$ & Truncation length for orthogonalization.\\
		& $s\in\N,$ & Sketch size.\vspace{0.3em}\\
		Output: & $\bm{x}_m\in\C^n$, & Approximate dsGMRES solution to $\bm{Ax} = \bm{b}$.
	\end{tabular}
	\vspace{.3em}
	\begin{algorithmic}[1]
		\State Compute residual $\bm{r}_0 = \bm{b} - \bm{Ax}_0$
		\State Compute basis $\bm{V}_m$ of $\mathcal{K}_m(\bm{A},\bm{r}_0)$ and $\bm{M}_m=\bm{AV}_m$ by the $k$-truncated Arnoldi (\Cref{alg:k_trunc_arnoldi})
		\State Compute $\bm{p}_m\in\N^m$ by DEIM (\Cref{alg:deim}) or Q-DEIM (\Cref{alg:q-deim})
		\If{$s>m$}
		\State Starting from $\bm{p}_m$, compute $\bm{p}\in\N^s$ by fast greedy MPE or GappyPod+E (\Cref{sec:over-sampling})
		\EndIf
		\State Set $\bm{S} = \bm{I}(\bm{p},:)$
		\State Compute thin QR decomposition $\bm{SM}_m = \bm{Q}_m\bm{R}_m$
		\State Solve least-squares problem $\bm{y} = \bm{R}_m^{-1}(\bm{Q}_m^\ast(\bm{Sr}_0))$
		\State Compute approximation $\bm{x}_m = \bm{x}_0 + \bm{V}_m\bm{y}$
	\end{algorithmic}
	%	\vspace{.3em}
	\caption{Deterministically sketched GMRES (dsGMRES).}\label{alg:dsGMRES}
\end{algorithm}

\Cref{alg:dsGMRES} computes the initial residual $\bm{r}_0\in\C^n$ before performing the same first steps as \Cref{alg:dsFOM} for basis generation, cf.~\Cref{sec:dsFOM}.
The main difference of dsGMRES compared to dsFOM and dsRR is that in line 8 of \Cref{alg:dsGMRES}, the thin QR decomposition $\bm{SM}_m=\bm{SAV}_m=\bm{Q}_m\bm{R}_m$ is computed.
Since this does not correspond to basis whitening, the assumptions of \Cref{cor:whitened_embedding} are not satisfied and the replacement $\widetilde{\bm{V}}_m = \bm{V}_m\bm{R}_m^{-1}$ formally leads to the subspace distortion factor $\kappa(\bm{SV}_m\bm{R}_m^{-1})\kappa(\bm{V}_m\bm{R}_m^{-1})$ to be included in the error bound \eqref{eq:sGMRES_error} as $\mathcal{K}_m(\bm{A},\bm{r}_0)$ moves towards being $\bm{A}$-stationary, cf.~\eqref{eq:distortion_cond_numbers}.
Interestingly, numerical experiments reported in \Cref{sec:experiments_linear_systems} suggest that neither of the two factors $\kappa(\bm{SV}_m\bm{R}_m^{-1})\kappa(\bm{V}_m\bm{R}_m^{-1})$ and $\kappa(\bm{V}_m\bm{R}_m^{-1})$ provides informative upper bounds.
The analysis of this behavior is an interesting question for future work.

\Cref{alg:dsGMRES} terminates by approximating the solution to $\bm{Ax}=\bm{b}$ via \eqref{eq:sGMRES} derived in \Cref{sec:sGMRES}.

\subsection{dsRR}\label{sec:dsRR}

\begin{algorithm}[t]%[htb!]
%	\vspace{0.3em}
	\begin{tabular}{lll}
		Input:
		& $\bm{A}\in\C^{n \times n},$ & Matrix.\\
		& $\bm{b}\in\C^n,$ & Starting vector.\\
		& $m\in\N,$ & Krylov subspace dimension.\\
		& $k\in\N,$ & Truncation length for orthogonalization.\\
		& $s\in\N,$ & Sketch size.\vspace{0.3em}\\
		Output: & $\lambda_i\in\C, i=1,\dots,m,$ & Approximate eigenvalues of $\bm{A}$.\\
		& $\bm{x}_i\in\C^n, i=1,\dots,m,$ & Approximate normalized eigenvectors of $\bm{A}$.
	\end{tabular}
	\vspace{.3em}
	\begin{algorithmic}[1]
		\State Compute basis $\bm{V}_m$ of $\mathcal{K}_m(\bm{A},\bm{b})$ by the $k$-truncated Arnoldi (\Cref{alg:k_trunc_arnoldi})
		\State Compute $\bm{p}_m\in\N^m$ by DEIM (\Cref{alg:deim}) or Q-DEIM (\Cref{alg:q-deim})
		\If{$s>m$}
		\State Starting from $\bm{p}_m$, compute $\bm{p}\in\N^s$ by fast greedy MPE or GappyPod+E (\Cref{sec:over-sampling})
		\EndIf
		\State Set $\bm{S} = \bm{I}(\bm{p},:)$
		\State Compute thin QR decomposition $\bm{SV}_m = \bm{Q}_m\bm{R}_m$
		%		\If{$\kappa(\bm{R}_m) > 1/\mathtt{eps}$}
		%			\State Whiten the basis via $\bm{V}_m = \bm{V}_m\bm{R}_m^{-1}$
		%			\State Compute new thin QR decomposition $\bm{SV}_m = \bm{Q}_m\bm{R}_m$
		%		\EndIf
		\State Solve eigenvalue problem $(\bm{R}_m^{-1} \bm{Q}_m^\ast (\bm{SAV}_m)) \bm{y}_i = \lambda_i\bm{y}_i$ for $i=1,\dots,m$
		\State Compute $\bm{x}_i=\bm{V}_m\bm{y}_i / \|\bm{V}_m\bm{y}_i\|_2$ for $i=1,\dots,m$
	\end{algorithmic}
%	\vspace{.3em}
	\caption{Deterministically sketched Rayleigh--Ritz (dsRR).}\label{alg:dsRR}
\end{algorithm}

The first 7 lines of \Cref{alg:dsRR} are identical to those of \Cref{alg:dsFOM}, cf.~their description in \Cref{sec:dsFOM}.
Solving the full eigenvalue problem of the matrix $\bm{R}_m^{-1} \bm{Q}_m^\ast (\bm{SAV}_m)\in\C^{m\times m}$ derived in \Cref{sec:sRR} can be achieved in $\mathcal{O}(m^3)$ runtime by standard methods such as the QR algorithm \cite{golub2013matrix}.
%Since $\bm{R}_m^{-1}$ is constructed via basis whitening, the subspace distortion factor to be included in the error bound \eqref{eq:sRR_error} is $\kappa(\bm{V}_m\bm{R}_m^{-1})$, cf.~\Cref{cor:whitened_embedding}.

Having normalized the approximate eigenvectors $\bm{x}_i\in\C^n$ in line $9$ of \Cref{alg:dsRR}, we restate the error bound \eqref{eq:sRR_error} in the setting of \Cref{sec:whitened_basis}.
The two inequalities obtained from \eqref{eq:whitened_subspace_embedding} for $\bm{v}=\bm{Ax}_i-\lambda_i\bm{x}_i$ yield
\begin{equation}\label{eq:dsRR_error}
\sigma_{\mathrm{min}}(\bm{V}_m\bm{R}_m^{-1}) \|\bm{S}(\bm{Ax}_i-\lambda_i\bm{x}_i)\|_2 \leq \|\bm{Ax}_i-\lambda_i\bm{x}_i\|_2 \leq \sigma_{\mathrm{max}}(\bm{V}_m\bm{R}_m^{-1}) \|\bm{S}(\bm{Ax}_i-\lambda_i\bm{x}_i)\|_2,
\end{equation}
which provides approximate bounds as $\mathcal{K}_m(\bm{A},\bm{b})$ moves towards being $\bm{A}$-stationary.

\section{Numerical experiments}\label{sec:numerics}

\begin{sloppypar}
In this section, dsFOM (\Cref{alg:dsFOM}), \mbox{dsGMRES} (\Cref{alg:dsGMRES}), and dsRR (\Cref{alg:dsRR}) are each tested on one example problem and compared against their unsketched and randomly sketched counterparts.
Throughout the experiments, randomized methods use discrete cosine transforms (DCT) as sketching matrices, MPE uses DEIM and GappyPOD+E uses Q-DEIM as initialization.
Runtimes were recorded with Matlab R2025b on an Intel i5-1135G7 processor with 4 cores, 2.40 GHz, and 8 GB RAM memory.
Matlab implementations are available under \url{https://github.com/KBergermann/dsKrylov}.
\end{sloppypar}

\subsection{Matrix functions}\label{sec:numerics_fAb}

We consider an example of exponential integrators, which are well-suited to integrate stiff or highly-oscillatory ordinary differential equations \cite{hochbruck2010exponential}.
We define the semi-discretized semi-linear parabolic differential equation
$$
\bm{u}'(t) = D\bm{Lu}(t) + g(\bm{u}(t)), \quad \bm{u}(0) = \bm{u}_0,
$$
in the unknown $\bm{u}: [0,T] \rightarrow \C^n$ with $D>0$ a diffusion constant, $\bm{L}\in\R^{n\times n}$ the symmetric negative semi-definite finite difference Neumann Laplacian on an equispaced grid of the spatial domain $[-1,1]^2$, and $g:\C^n\rightarrow\C^n$ a non-linear function.
In particular, we choose $D=\frac{1}{40}$, $g(\bm{u})=\frac{1}{4}\bm{u}(1-\bm{u})$, and the initial conditions $\bm{u}_0\in\C^n$ to originate from evaluating the function $h(x,y)=\frac{1}{2}e^{-x^2}e^{-y^2}$ on the spatial grid.

We consider the exponential Euler method \cite{hochbruck2010exponential}, exploiting a result by Al-Mohy and Higham \cite{al2011computing} that has recently been used in Krylov methods \cite{gaudreault2018kiops,bergermann2024adaptive}.
Specifically, one step of the exponential Euler with time step size $t=1$ is obtained by evaluating
$$
e^{\bm{A}}\bm{b} = e^{D\bm{L}}\bm{u}_0 + \varphi_1(D\bm{L})g(\bm{u}_0), \quad \bm{A} = \begin{bmatrix}
D\bm{L} & g(\bm{u}_0)\\
\bm{0}^\ast & 0
\end{bmatrix}, \quad \bm{b} = \begin{bmatrix}
\bm{u}_0\\ 1
\end{bmatrix}, \quad \varphi_1(z)=\frac{e^z-1}{z},
$$
and extracting the first $n$ entries of the result $e^{\bm{A}}\bm{b}$.
Note that $\bm{A}\in\C^{(n+1)\times (n+1)}$ is non-symmetric although $\bm{L}$ is symmetric.
We choose $n=256^2$, the maximal Krylov subspace dimension $m=280$, and truncation parameter $k=2$ in \Cref{alg:k_trunc_arnoldi}, which leads to $\kappa(\bm{V}_m)\approx 4.43\cdot 10^3$.
The condition number of the basis is heavily initial condition-dependent and may reach inverse machine precision for other choices.

Since no notion of a residual exists for $f(\bm{A})\bm{b}$, we compute a reference solution via the classical FOM approximation \eqref{eq:FOM} using an orthogonal Krylov basis of dimension $m=350$ for which an a-posteriori error estimate indicates accuracy up to machine precision \cite{saad1992analysis}.

\begin{figure}
	\centering
	\subfloat[Absolute error $\|\bm{f}_m^\star - e^{\bm{A}}\bm{b}\|_2$]{
		\includegraphics[width=0.493\textwidth]{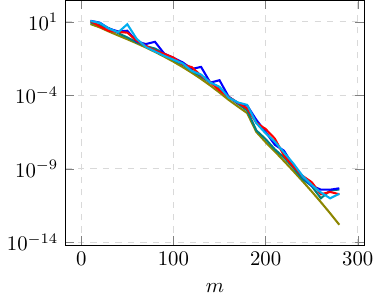}\label{fig:fom:abs}
	}
	\hfill
	\subfloat[Relative error $\frac{\|\bm{f}_m^\star - e^{\bm{A}}\bm{b}\|_2}{\|\bm{f}_m^{\mathrm{FOM}} - e^{\bm{A}}\bm{b}\|_2}$]{
		\includegraphics[width=0.467\textwidth]{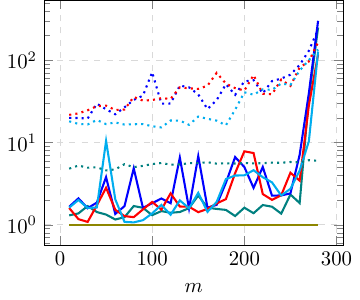}\label{fig:fom:rel}
	}
	\vspace{5pt}
	\centering
	\includegraphics[width=0.9\textwidth]{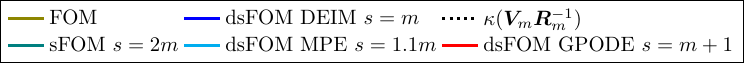}
	\vspace{-5pt}
	\caption{Approximation errors of FOM, sketched FOM (sFOM), and deterministically sketched FOM (dsFOM) to $e^{\bm{A}}\bm{b}$ with $\bm{A}$ a 2d finite difference Laplacian with one appended row and column.
	The symbol $\star\in\{\mathrm{FOM}, \mathrm{sFOM}, \mathrm{dsFOM}\}$ holds the place for different approximations, e.g., \eqref{eq:FOM} and \eqref{eq:sFOM}.
	For dsFOM, the three row subset selection strategies DEIM ($s=m$), GPODE (GappyPOD+E, $s=m+1$), and MPE ($s=1.1m$) are employed.
	(a) Absolute errors are plotted against the Krylov dimension $m$, (b) relative errors alongside condition number error bounds are plotted against $m$.
	The reference solution $e^{\bm{A}}\bm{b}$ is computed with FOM with sufficiently many iterations to guarantee high accuracy.}\label{fig:fom}
\end{figure}

\Cref{fig:fom:abs} shows that absolute errors $\|\bm{f}_m - e^{\bm{A}}\bm{b}\|_2$ of FOM, sFOM, and dsFOM with different row subset selection strategies behave similarly with respect to $m$.
Using the row indices obtained by Q-DEIM to construct the deterministic sketching matrix for dsFOM leads to substantial approximation errors that are indicated by values of the subspace distortion factor $\kappa(\bm{V}_m\bm{R}_m^{-1})$ of above $10^{10}$, cf.~\Cref{cor:whitened_embedding}.
Although current analyses of sFOM consider different quantities, cf.~\eqref{eq:sFOM} and \cite{cortinovis2024speeding,palitta2025sketched}, \Cref{fig:fom:rel} suggests that $\kappa(\bm{V}_m\bm{R}_m^{-1})$ may also take the role of a bound on the relative error $\frac{\|\bm{f}_m - e^{\bm{A}}\bm{b}\|_2}{\|\bm{f}_m^{\mathrm{FOM}} - e^{\bm{A}}\bm{b}\|_2}$ of sFOM and dsFOM with respect to the orthogonal FOM approximation \eqref{eq:FOM}.
In particular, $\kappa(\bm{V}_m\bm{R}_m^{-1})$ for sFOM ranging between $5$ and $6$ computationally confirms \Cref{rem:JL_distortion}.

The described dissatisfactory behavior of Q-DEIM can be cured by adding one additional row using GappyPOD+E with $s=m+1$.
Over-sampling the DEIM row indices by MPE with $s=1.1m$ shows some improvements in terms of approximation errors and the distortion factor $\kappa(\bm{V}_m\bm{R}_m^{-1})$, cf.~\Cref{fig:fom:rel}.

On top of the $2$-truncated Arnoldi ($0.404$s), runtimes of the remainder of \Cref{alg:dsFOM} for dsFOM with DEIM ($9.13$s), MPE with $s=1.1m$ ($14.1$s), and GappyPOD+E with $s=m+1$ ($8.08$s) range between those of sFOM with DCT ($0.988$s) and the fash Walsh--Hadamard transform ($52.3$s) as sketching matrices with $s=2m$ for the parameters $n=65\,536$ and $m=280$.
The unfavorable asymptotic runtime dependence of row subset selection methods on the Krylov dimension $m$ discussed at the end of \Cref{sec:deim_qdeim} currently prevents this approach to be competitive with DCT in sFOM since relatively few applications of the sketching matrix are required.

\subsection{Linear systems}\label{sec:experiments_linear_systems}

We consider the non-symmetric linear initial value problem
$$
\bm{u}'(t) = \bm{Au}(t), \quad \bm{u}(0) = \bm{u}_0,
$$
in the unknown $\bm{u}: [0,T] \rightarrow \C^n$ with $\bm{A}\in\R^{n \times n}$ a semi-discretized negative definite convection-diffusion operator in the convection-dominated regime as considered in \cite[Sec.~5.1]{guttel2023randomized}.
More specifically, we define $\bm{A}=D\bm{L}+\bm{C}$ on $d\in\N$ equispaced grid points in both dimensions of the spatial domain $[0,1]^2$ with $D=10^{-3}$, $\bm{L}=\frac{1}{(d-1)^2}(\widetilde{\bm{L}}\otimes\bm{I}+\bm{I}\otimes\widetilde{\bm{L}})$ with $\widetilde{\bm{L}}=\text{tridiag}(1,-2,1)\in\R^{d\times d}$ the 1d Laplacian, and $\bm{C}=\frac{1}{d-1}(\widetilde{\bm{C}}\otimes\bm{I}+\bm{I}\otimes\widetilde{\bm{C}})$ with $\widetilde{\bm{C}}=\text{tridiag}(1,-1,0)\in\R^{d\times d}$ a 1d convection operator.

\begin{sloppypar}
	One step of the implicit Euler method with $t=1$ leads to the linear system \mbox{$(\bm{I}-\bm{A})\bm{x}=\bm{b}$} with $\bm{b}=\bm{u}_0$, which is constructed by evaluating the function \mbox{$h(x,y)=0.3+256xy(1-x)(1-y)$} on the spatial grid.
	Choosing \mbox{$n=d^2=256^2=65\,536$}, the maximal Krylov subspace dimension $m=550$, and truncation parameter $k=4$ in \Cref{alg:k_trunc_arnoldi} leads to $\kappa(\bm{V}_m)=2.01\cdot 10^{16}$.
\end{sloppypar}

\begin{figure}
	\centering
	\subfloat[Absolute residual $\|(\bm{I}-\bm{A})\bm{x}_m^\star - \bm{b}\|_2$]{
		\includegraphics[width=0.485\textwidth]{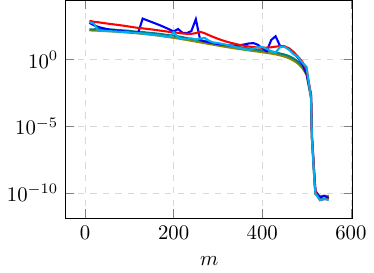}\label{fig:GMRES:abs}
	}
	\hfill
	\subfloat[Relative residual $\frac{\|(\bm{I}-\bm{A})\bm{x}_m^\star - \bm{b}\|_2}{\|(\bm{I}-\bm{A})\bm{x}_m^{\mathrm{GMRES}} - \bm{b}\|_2}$]{
		\includegraphics[width=0.475\textwidth]{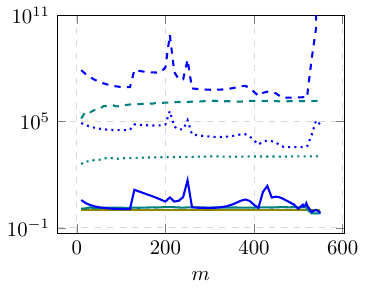}\label{fig:GMRES:rel_DEIM}
	}

	\subfloat[Relative residual $\frac{\|(\bm{I}-\bm{A})\bm{x}_m^\star - \bm{b}\|_2}{\|(\bm{I}-\bm{A})\bm{x}_m^{\mathrm{GMRES}} - \bm{b}\|_2}$]{
		\includegraphics[width=0.48\textwidth]{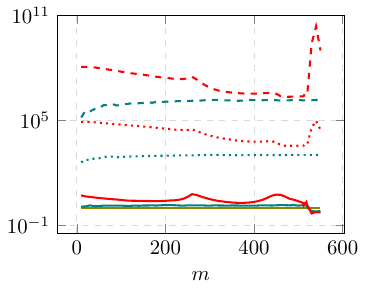}\label{fig:GMRES:rel_GPODE}
	}
	\hfill
	\subfloat[Relative residual $\frac{\|(\bm{I}-\bm{A})\bm{x}_m^\star - \bm{b}\|_2}{\|(\bm{I}-\bm{A})\bm{x}_m^{\mathrm{GMRES}} - \bm{b}\|_2}$]{
		\includegraphics[width=0.48\textwidth]{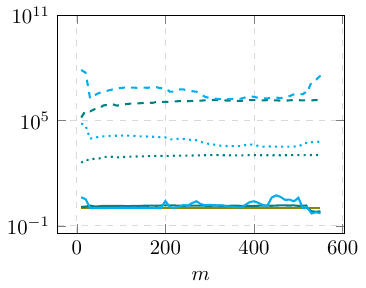}\label{fig:GMRES:rel_MPE}
	}
	\vspace{5pt}
	\centering
	\includegraphics[width=0.75\textwidth]{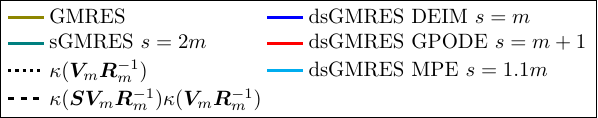}
	\vspace{-5pt}
	\caption{Residuals of GMRES, sketched GMRES (sGMRES) and deterministically sketched GMRES (dsGMRES) for $(\bm{I}-\bm{A})\bm{x}=\bm{b}$ with $\bm{A}$ a convection-diffusion operator.
	The symbol \mbox{$\star\in\{\mathrm{GMRES}, \mathrm{sGMRES}, \mathrm{dsGMRES}\}$} holds the place for different approximations, e.g., \eqref{eq:GMRES} and \eqref{eq:sGMRES}.
	For dsGMRES, the three row subset selection strategies DEIM ($s=m$), GPODE (GappyPOD+E, $s=m+1$), and MPE ($s=1.1m$) are employed.
	(a) Absolute residuals for all five methods are plotted against the Krylov dimension $m$, (b)-(d) relative residuals alongside two condition number bounds are plotted against $m$.}\label{fig:gmres}
\end{figure}

\Cref{fig:GMRES:abs} shows a very similar residual convergence behavior of (plain) GMRES, sGMRES, and dsGMRES between $m=500$ and $m=520$ with some deviations of the dsGMRES variants for smaller iteration numbers.
As in \Cref{sec:numerics_fAb}, row indices obtained by Q-DEIM require minimal over-sampling by GappyPOD+E with $s=m+1$ to give accurate results in dsGMRES, cf.~\Cref{fig:GMRES:rel_GPODE}.
The errors of dsGMRES with DEIM can be notably improved by over-sampling via MPE with $s=1.1m$, cf.~\Cref{fig:GMRES:rel_DEIM,fig:GMRES:rel_MPE}.

Since sGMRES and dsGMRES contain no basis whitening, the error bound \eqref{eq:sFOM_error} together with the analysis from \Cref{sec:subspace_embeddings} suggest the subspace distortion factor \eqref{eq:distortion_cond_numbers} after basis transformation via $\bm{R}_m^{-1}$, i.e., $\kappa(\bm{SV}_m\bm{R}_m^{-1})\kappa(\bm{V}_m\bm{R}_m^{-1})$, as an error bound on the relative residual $\frac{\|(\bm{I}-\bm{A})\bm{x}_m - \bm{b}\|_2}{\|(\bm{I}-\bm{A})\bm{x}_m^{\mathrm{GMRES}} - \bm{b}\|_2}$.
The dashed lines in \Cref{fig:GMRES:rel_DEIM,fig:GMRES:rel_GPODE,fig:GMRES:rel_MPE} illustrate that this bound is impractically large.
Similarly, the condition number $\kappa(\bm{V}_m\bm{R}_m^{-1})$ that \emph{would apply if sGMRES and dsGMRES were to use basis whitening} is not sharp in this case either.
However, the relative convergence history of the relative error of dsGMRES with DEIM in \Cref{fig:GMRES:rel_DEIM} is still somewhat reflected in the condition number bounds.
A deeper investigation of this behavior would be an interesting future research direction.

\subsection{Eigenvalue problems}

Finally, we consider a problem from network science.
We choose the directed \texttt{web-Stanford} graph\footnote{\url{https://sparse.tamu.edu/SNAP/web-Stanford}} \cite{leskovec2009community}, where nodes represent websites and edges hyperlinks.
The graph is unweighted, has no self-loops, and $n=257\,824$ nodes after removing all nodes with zero in-degrees.
We use the resulting non-symmetric adjacency matrix $\bm{A}\in\R^{n\times n}$ to construct the normalized graph in-Laplacian $\bm{L} = \bm{I} - \bm{D}_{\mathrm{in}}^{-1/2}\bm{A}\bm{D}_{\mathrm{in}}^{-1/2}$, with $\bm{D}_{\mathrm{in}} = \text{diag}(\bm{A}^\ast\bm{1})$, whose extremal eigenpairs contain information on structural and dynamical network properties \cite{bonacich1987power,pons2005computing,von2007tutorial,bergermann2025gradient}.

\begin{figure}
	\centering
	\subfloat[Residuals $\|\bm{Lx}^\star_i-\lambda_i^\star\bm{x}^\star_i\|_2$ for $i=1,\dots,m$]{
		\includegraphics[width=0.99\textwidth]{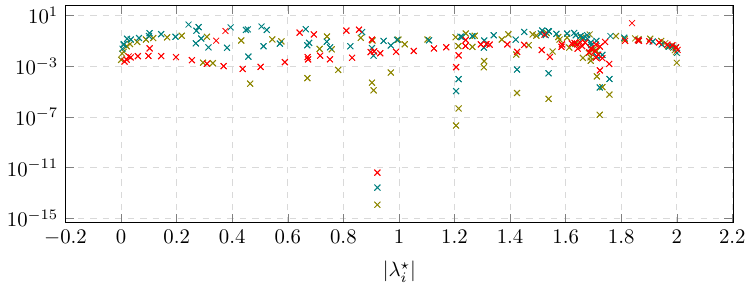}\label{fig:RR:scatter}
	}
	
\subfloat[Residual $\|\bm{Lx}^\star_2-\lambda_2^\star\bm{x}^\star_2\|_2$ of the Fiedler vector]{
	\includegraphics[width=0.498\textwidth]{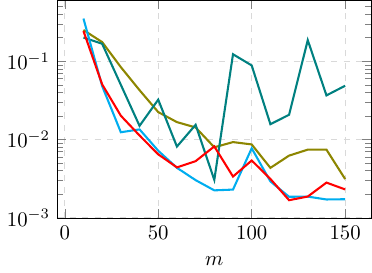}\label{fig:RR:convergence}
}
\hfill
%\raisebox{-6pt}{
	\subfloat[Relative residuals $\frac{\|\bm{Lx}^\star_2-\lambda_2^\star\bm{x}^\star_2\|_2}{\|\bm{S}(\bm{Lx}^\star_2-\lambda_2^\star\bm{x}_2^\star)\|_2}$ and their bounds]{
	\includegraphics[width=0.462\textwidth]{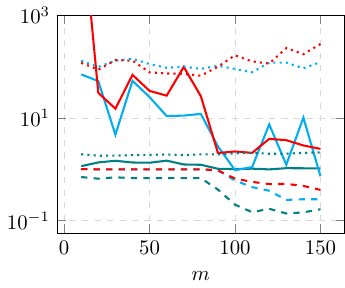}\label{fig:RR:bounds}
}
%\vspace{5pt}

	\vspace{5pt}
	\centering
	\includegraphics[width=0.8\textwidth]{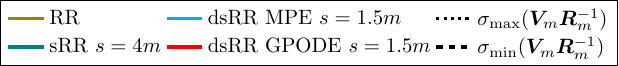}
	\vspace{-5pt}
	\caption{Eigenvector residuals of Rayleigh--Ritz (RR), sketched RR (sRR), and deterministically sketched RR (dsRR) to the eigenvalue problem $\bm{Lx}=\lambda\bm{x}$ with $\bm{L}$ a normalized non-symmetric graph in-Laplacian.
	The symbol $\star\in\{\mathrm{RR}, \mathrm{sRR}, \mathrm{dsRR}\}$ holds the place for different approximations, cf.~\Cref{sec:sRR}.
	For dsRR, the two row subset selection strategies MPE and GPODE (GappyPOD+E) are employed with sketch size $s=1.5m$.
	(a) All $m=150$ eigenvector residuals are plotted against the respective eigenvalue magnitude, (b) convergence of the Fiedler vector residual is plotted against the Krylov dimension $m$, (c) relative residuals and their bounds are plotted against $m$.}
\end{figure}

Constructing a non-orthogonal Krylov basis with uniformly random starting vector $\bm{b}$ via \Cref{alg:k_trunc_arnoldi} with $k=8$ and $m=150$ leads to $\kappa(\bm{V}_m)=1.77\cdot 10^{16}$.
\Cref{fig:RR:scatter} shows that eigenvector residuals obtained by dsRR GappyPOD+E with $s=1.5m$ are similar to those of RR and sRR with $s=4m$ for large magnitude eigenpairs and somewhat lower for small magnitude eigenpairs.
The latter are of major interest as they decode community structure of the network.
In particular, \Cref{fig:RR:convergence} shows that convergence of the residual of the Fiedler vector $\bm{x}_2$ in the dsRR variants is faster compared to RR and sRR.
\Cref{fig:RR:bounds} illustrates the behavior of the approximate error bounds \eqref{eq:dsRR_error}.
The violation of the upper bound of dsRR GappyPOD+E with $s=1.5m$ for $m=10$ is an example of the effect that the eigenvector residual $\bm{Lx}_2-\lambda_2\bm{x}_2$ is not an element of the Krylov subspace $\mathcal{K}_m(\bm{L},\bm{b})$, cf.~\Cref{rem:error_bounds}.
However, as $m$ increases and $\mathcal{K}_m(\bm{L},\bm{b})$ moves towards being $\bm{L}$-invariant, the bounds \eqref{eq:dsRR_error} work accurately.

Similarly to sRR, a larger degree of over-sampling in comparison to dsFOM and dsGMRES is advisable for dsRR.
As in \Cref{sec:numerics_fAb,sec:experiments_linear_systems}, dsRR with Q-DEIM and $s=m$ fails to extract a well-conditioned sketched basis, however, empirically, over-sampling by more than one additional vector is required to obtain accurate eigenpair approximations.
In particular, we also observe the occurrence of spurious Ritz values outside of the field-of-values of the original matrix discussed in \cite{guttel2023randomized,palitta2025sketched} in the deterministically sketched case.
Numerical experiments not included in this manuscript suggest the tendency of these spurious Ritz values to move closer to the field-of-values as the degree of over-sampling is increased.

\section{Conclusion and outlook}\label{sec:conclusion}

This manuscript proposes a new class of sketched Krylov subspace methods for matrix functions, linear systems, and eigenvalue problems.
Leveraging new theoretical insights from the analysis of subspace embeddings obtained by arbitrary sketching matrices, they utilize deterministic row subset selection matrices instead of randomized Johnson--Lindenstrauss transforms as sketching matrices.
They achieve similar accuracies to their randomly sketched counterparts, but construct subspace embeddings that hold with probability 1.

Several parts of the manuscript highlight open questions outside of its scope.
We presented dsFOM, dsGMRES, and dsRR in their most basic form.
Future work could study the incorporation of advanced techniques such as restarting, preconditioning, deflation, other basis generation techniques, or the development of different deterministic sketching approaches that might be faster or equipped with sharper a-priori bounds.

Randomized sketching with oblivious subspace embeddings represents a powerful and generally applicable framework.
However, \Cref{fig:trace_estimation} uses the example of trace estimation to illustrate that derandomized approaches exploiting specific problem properties hold the potential to show superior performance.
This manuscript showcases the efficacy of deterministic sketching in Krylov subspace methods.
A future direction in sketched numerical linear algebra could be the development of deterministic problem-specific sketching approaches that outperform randomized methods by exploiting problem-specific properties.

\section*{Acknowledgments}

The author thanks Alice Cortinovis, Stefano Massei, and Marcel Schweitzer for helpful discussions.

\appendix

\section{Background from model order reduction}\label{sec:MOR}

The discrete empirical interpolation method (DEIM) \cite{chaturantabut2010nonlinear}, cf.~\Cref{alg:deim} and \Cref{sec:deim_qdeim}, was originally introduced in the context of model order reduction.
Here, a prototypical problem is that of having to solve large-scale time- or parameter-dependent non-linear system of ordinary differential equations
\begin{equation}\label{eq:MOR_ODE}
\frac{\partial\bm{y}(t)}{\partial t} = \bm{Ay}(t) + F(\bm{y}(t)), \qquad \bm{Ay}(\mu) + F(\bm{y}(\mu)) = \bm{0}
\end{equation}
with $\bm{y}:\R\supset\mathcal{I}\rightarrow\C^n, \bm{A}\in\C^{n\times n},$ and $F:\C^n\rightarrow\C^n$ non-linear for many time or parameter values.
A successful approach for reducing the complexity of \eqref{eq:MOR_ODE} is dimensionality reduction via Galerkin projection \cite{benner2015survey,chaturantabut2010nonlinear}.
Here, a given orthonormal basis $\bm{U}_k\in\C^{n\times k}$ is used to obtain the reduced-order systems
\begin{equation}\label{eq:MOR_reduced}
\frac{\partial\widetilde{\bm{y}}(t)}{\partial t} = \bm{U}_k^\ast\bm{A}\bm{U}_k\widetilde{\bm{y}}(t) + \bm{U}_k^\ast F(\bm{U}_k\widetilde{\bm{y}}(t)), \qquad \bm{U}_k^\ast\bm{A}\bm{U}_k\widetilde{\bm{y}}(\mu) + \bm{U}_k^\ast F(\bm{U}_k\widetilde{\bm{y}}(\mu)) = \bm{0}.
\end{equation}
in a new variable $\widetilde{\bm{y}}:\mathcal{I}\rightarrow\C^k$ defined via $\bm{y} = \bm{U}_k\widetilde{\bm{y}}$.
Such a basis $\bm{U}_k\in\C^{n\times k}$ is often constructed from the proper orthogonal decomposition (POD) \cite{benner2015survey}.
Its idea is to solve the expensive problem \eqref{eq:MOR_ODE} for a small number of time or parameter values, collect the solutions $\bm{y}_1,\dots,\bm{y}_{n_s}$ as columns of a snapshot matrix $\bm{Y}\in\C^{n\times n_s}$, and compute the thin singular value decomposition
$$
\bm{Y} = [\bm{y}_1, \dots,\bm{y}_{n_s}] = \bm{U}_k\bm{\Sigma}_k\bm{W}_k^\ast,
$$
which is well-known to be the best rank-$k$ approximation of $\bm{Y}$.
The orthonormal basis used in the Galerkin projection then corresponds to the matrix $\bm{U}_k$ of left singular vectors of $\bm{Y}$.

The major computational effort in interacting with the reduced-order model \eqref{eq:MOR_reduced} lies in evaluating the non-linearity
\begin{equation}\label{eq:MOR_non-linearity}
\bm{f}(\tau) = \begin{cases}
F(\bm{U}_k\widetilde{\bm{y}}(t)), & \text{time-dependent,}\\
F(\bm{U}_k\widetilde{\bm{y}}(\mu)), & \text{parameter-dependent,}
\end{cases}
\end{equation}
since its arguments are still elements of $\C^n$ \cite{chaturantabut2010nonlinear}.
This issue is known as the lifting bottleneck.

The discrete empirical interpolation method (DEIM) \cite{chaturantabut2010nonlinear} addresses this by applying the projection
\begin{equation}\label{eq:DEIM_projection}
\bm{f}(\tau) = \widehat{\bm{V}}_m\bm{c}(\tau)
\end{equation}
via some full-rank basis $\widehat{\bm{V}}_m\in\C^{n\times m}$.
In order to uniquely determine the coefficient vector $\bm{c}\in\C^m$, DEIM selects $m$ linearly independent rows from \eqref{eq:DEIM_projection}.
In the notation of \Cref{def:RSS_sketching_matrix}, this leads to the linear system
\begin{equation}\label{eq:DEIM_linear_system}
\bm{S}\bm{f}(\tau) = (\bm{S}\widehat{\bm{V}}_m)\bm{c}(\tau) \Leftrightarrow  \bm{c}(\tau) = (\bm{S}\widehat{\bm{V}}_m)^{-1}\bm{S}\bm{f}(\tau).
\end{equation}
Similarly to the POD approach for the full system \eqref{eq:MOR_ODE}, the basis $\widehat{\bm{V}}_m$ is obtained from the thin singular value decomposition of the non-linear snapshots $F(\bm{Y}) = \widehat{\bm{V}}_m\widehat{\bm{\Sigma}}_m\widehat{\bm{W}}_m$.
Here, the non-linearity $F$ is applied column-wise to the previously computed snapshots $\bm{Y} = [\bm{y}_1, \dots,\bm{y}_{n_s}]$.

%\bibliographystyle{siam}
%\bibliography{sketching}

\end{document}